\definecolor{my_color}{rgb}{0,0.5,0.5}
\definecolor{MIXT}{rgb}{0.4,0.3,0.6}
\numberwithin{equation}{section}
\newtheorem{thm}{Theorem}[section]
\newtheorem{lm}[thm]{Lemma}
\newtheorem{cl}[thm]{Corollary}
\newtheorem{prop}[thm]{Proposition}
\theoremstyle{remark}
\newtheorem{rmk}[thm]{Remark}
\theoremstyle{definition}
\newtheorem{ex}[thm]{Example}
\newtheorem{df}{Definition}
\newtheorem*{rema}{Remark}
\newenvironment{proof*}
{\noindent {\sl Proof.}\quad }{\hfill $\square$}
\newcommand {\ah}{{\mathfrak a}}
\newcommand {\be}{{\mathfrak b}}
\newcommand {\ce}{{\mathfrak c}}
\newcommand {\g}{{\mathfrak g}}
\newcommand {\el}{{\mathfrak l}}
\newcommand {\te}{{\mathfrak t}}
\newcommand {\ut}{{\mathfrak u}}
\newcommand {\z}{{\mathfrak z}}
\newcommand {\gH}{{\eus H}}
\newcommand {\gS}{{\eus S}}
\newcommand {\esi}{\varepsilon}
\newcommand {\ap}{\alpha}
\newcommand {\lb}{\lambda}
\newcommand {\HW}{\widehat W}
\newcommand {\HV}{\widehat V}
\newcommand {\HP}{\widehat\Pi}
\newcommand {\HD}{\widehat\Delta}
\newcommand {\BR}{{\mathbb R}}
\newcommand {\BN}{{\mathbb N}}
\newcommand {\hot}{{\mathsf{ht}}}
\newcommand {\rk}{{\mathsf{rk\,}}}
\newcommand {\rt}{{\mathsf{rt}}}
\newcommand {\supp}{{\mathsf{supp}}}
\newcommand {\GR}[2]{{\textrm{{\bf #1}}}_{#2}}
\newcommand {\un}{\underline}
\newcommand {\Ab}{\mathfrak{Ab}}
\newcommand {\Abo}{\mathfrak{Ab}^o}
\newcommand {\beq}{\begin{equation}}
\newcommand {\eeq}{\end{equation}}
\newcommand{\curge}{\succcurlyeq}
\newcommand{\curle}{\preccurlyeq}
\renewcommand{\le}{\leqslant}
\renewcommand{\ge}{\geqslant}
\newenvironment{E8}[8]{%
{\small\begin{tabular}{@{}c@{}}
{#1}-{#2}-{#3}-{#4}-\lower3.5ex\vbox{\hbox{{#5}\rule{0ex}{2.5ex}}
\hbox{\hspace{0.4ex}\rule{.2ex}{1ex}\rule{0ex}{1.4ex}}\hbox{{#8}\strut}}-{#6}-{#7}
\end{tabular}}}
\newcommand{\eus}{\EuScript}
\font\Bbbfont=msbm10 scaled 1200%
\font\Bbbsmallfont=msbm8%
\begin{document}
\setlength{\parskip}{2pt plus 4pt minus 0pt}
\hfill {\scriptsize May 5, 2013} 
\vskip1.5ex

\title[Abelian ideals and root systems]{Abelian ideals of a Borel subalgebra 
and root systems}
\author{Dmitri I. Panyushev}
\address[]{Independent University of Moscow,
Bol'shoi Vlasevskii per. 11, 119002 Moscow,  Russia
\hfil\break\indent
Institute for Information Transmission Problems of the R.A.S., B. Karetnyi per. 19,  
127994 Moscow,  Russia}
\email{panyushev@iitp.ru}
\keywords{Root system, Borel subalgebra, minuscule element, abelian ideal}
\subjclass[2010]{17B20, 17B22, 20F55}
\begin{abstract}

Let $\g$ be a simple Lie algebra  
and  $\Abo$  the poset of non-trivial abelian ideals of a fixed Borel subalgebra of $\g$. 
In~\cite{imrn}, we constructed a partition $\Abo=\sqcup_\mu \Ab_\mu$ parameterised by 
the long positive roots of $\g$ and studied the subposets $\Ab_\mu$. In this note, we show that 
this partition is compatible with intersections, relate it to the Kostant-Peterson parameterisation and 
to the centralisers of abelian ideals. We also prove that the poset of positive roots of $\g$ is a join-semilattice.
\end{abstract}
\maketitle

\section*{Introduction}

\noindent
Let $\g$ be a complex simple Lie algebra with a  triangular decomposition 
$\g=\ut\oplus\te\oplus \ut^-$. Here $\te$ is a fixed Cartan subalgebra and $\be=\ut\oplus\te$
is a fixed Borel subalgebra. Accordingly, $\Delta$ is the set of roots of $(\g,\te)$, $\Delta^+$
is the set of positive roots corresponding to $\ut$, and $\Pi$ is the set of simple roots in  
$\Delta^+$. Write $\theta$ for the highest root in  $\Delta^+$.

A subspace $\ah\subset\ut$ is  an {\it abelian ideal\/} (of $\be$) if 
$[\be,\ah]\subset \ah$ and $[\ah,\ah]=0$. 
The set of abelian ideals of $\be$ is denoted by $\Ab$.
In the landmark paper~\cite{ko98},  Kostant  elaborated on 
Dale Peterson's theory of Abelian ideals (in particular, the astounding result 
that $\#\Ab=2^{\rk\g}$) and related abelian ideals with problems in representation theory. Since then, 
abelian ideals attracted a lot of attention,~see e.g.
\cite{cp1,cp2,cp3,pr,imrn,subsets,suter}. 
We think of $\Ab$ as a poset with respect to inclusion.
As $\ah\in\Ab$ is a sum of certain root spaces, we may (and will) identify such $\ah$ 
with the corresponding subset $I=I_\ah$ of $\Delta^+$.

Let $\Abo=\Abo(\g)$ denote the set of nonzero abelian ideals and $\Delta^+_l$  the set
of long positive roots. In the simply-laced case, all roots are assumed to be long.
In \cite[Sect.\,2]{imrn}, we defined a surjective mapping 
$\tau: \Abo \to \Delta^+_l$ and studied its fibres. 
If $\ah\in \Abo$ and $\tau(\ah)=\mu$, then $\mu$ is called the 
{\it rootlet\/} of $\ah$, also denoted by $\rt(\ah)$ or $\rt(I_\ah)$.  
Letting $\Ab_\mu=\tau^{-1}(\mu)$, we get a  partition of $\Abo$ parameterised by
$\Delta^+_l$. Each fibre $\Ab_\mu$ is regarded as a 
sub-poset of $\Ab$.  It is known that, for any $\mu\in \Delta^+_l$,
$\Ab_\mu$ has a unique minimal and unique maximal  element \cite[Sect.\,3]{imrn}. 
Regarding abelian ideals as subsets of $\Delta^+$, we write $I(\mu)_{\min}$ (resp. $I(\mu)_
{\max}$) for the minimal (resp. maximal) element of  $\Ab_\mu$.
We also say that 
$I(\mu)_{\min}$ is the $\mu$-{\it minimal\/} and $I(\mu)_{\max}$ is the 
$\mu$-{\it maximal\/} ideal.
Various properties of the $\mu$-minimal ideals are obtained in \cite[Sect.\,4]{imrn}. For instance,
if $(\ ,\ )$ is a Weyl group invariant scalar product, then   
\begin{itemize}
\item \ $\# I(\mu)_{\min}=(\rho, \theta^\vee-\mu^\vee)+1$, where $\rho=\frac{1}{2}
      \sum_{\gamma\in\Delta^+} \gamma$ and $\mu^\vee=2\mu/(\mu,\mu)$;
\item \   $I=I(\mu)_{\min}$  for some $\mu\in\Delta^+_l$ if and only if $I\subset
      \eus H:=\{\gamma\in \Delta^+ \mid (\gamma, \theta)\ne 0\}$;
\item \ $I(\mu)_{\min} \subset I(\mu')_{\min}$ if and only if $\mu'\curle \mu$, where `$\curle$' is the usual {\it root order\/} on $\Delta^+$.
\item \ $I(\mu)_{\min}=I(\mu)_{\max}$ if and only if $(\mu,\theta)=0$ \cite[Thm.\,5.1]{imrn}.
\end{itemize}
If $\rt(I)\not\in \Pi$,  then there is $I'\in \Ab$ such that 
$I'\supset I$, $\#I'=\#I+1$ and $\rt(I')\prec \rt(I)$. This is implicit in~\cite[Thm.\,2.6]{imrn},
cf. also Proposition~\ref{prop:elem-ext}. This implies that the (globally) maximal ideals of $\Ab$ are precisely the maximal elements of the posets $\Ab_{\ap}$ for $\ap\in \Pi\cap\Delta^+_l=:\Pi_l$, see \cite[Cor.\,3.8]{imrn}. A closed formula for the dimension of all maximal abelian ideals is proved in
\cite[Sect.\,8]{cp3},\cite{suter}.
In this paper, we elaborate on further properties of the partition
\beq  \label{eq:parti}
  \Abo=\sqcup_{\mu\in\Delta^+_l}\Ab_\mu \ 
\eeq
and related properties of abelian ideals and root systems.

In Section~\ref{sect:intersect}, we show that  partition \eqref{eq:parti} behaves well with respect 
to intersections.

\begin{thm}  \label{thm:intr2} 
Let $\mu,\mu'\in\Delta^+_l$.
\begin{itemize}
\item[\sf (i)] \ If $I \in \Ab_\mu$ and $I'\in \Ab_{\mu'}$, then $I\cap I'$ belongs to
$\Ab_{\nu}$, where $\nu$ does not depend on the choice of $I$ and $I'$. Actually,
$\nu$ is the unique smallest long positive root such that $\nu\curge \mu$
and $\nu\curge \mu'$. In particular, such $\nu$ always exists;
\item[\sf (ii)] \ Furthermore,   $I(\mu)_{\min}\cap I(\mu')_{\min}=I(\nu)_{\min}$,
$I(\mu)_{\max}\cap I(\mu')_{\max}=I(\nu)_{\max}$, and every ideal in $\Ab_\nu$ occurs as 
intersection of two ideals from $\Ab_\mu$ and $\Ab_{\mu'}$.
\end{itemize}
\end{thm}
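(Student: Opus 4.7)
My strategy is to translate both parts through a single characterisation of the rootlet:
\[
\rt(J) = \min\bigl\{\lambda \in \Delta^+_l : I(\lambda)_{\min} \subset J\bigr\}
\quad\text{for every } J \in \Abo,
\]
with minimum in the root order $\curle$. That $\rt(J)$ itself lies in the above set is immediate from $J \supset I(\rt(J))_{\min}$; the minimality claim rests on the antitonicity $I \subset J \Rightarrow \rt(I) \curge \rt(J)$, which I would obtain by iterating the elementary-extension proposition alluded to before the theorem (each size-one enlargement either strictly lowers the rootlet when it is non-simple, or leaves it unchanged, keeping us inside the same $\Ab_\mu$).

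Granted this characterisation, existence and uniqueness of $\nu$, together with the minimum-intersection formula of part~(ii), follow from a short argument. The set $I(\mu)_{\min} \cap I(\mu')_{\min}$ is an abelian ideal contained in $\eus H$, and nonzero since $\g_\theta$ lies in every nonzero abelian ideal; by the criterion ``$I \subset \eus H$ iff $I = I(\lambda)_{\min}$'' it equals $I(\nu)_{\min}$ for a unique $\nu \in \Delta^+_l$. The monotonicity $I(\lambda)_{\min} \subset I(\lambda')_{\min} \Leftrightarrow \lambda' \curle \lambda$ forces $\nu \curge \mu,\mu'$, and any other common upper bound $\nu'$ satisfies $I(\nu')_{\min} \subset I(\nu)_{\min}$, whence $\nu' \curge \nu$. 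Part~(i) then follows at once: for arbitrary $I \in \Ab_\mu$, $I' \in \Ab_{\mu'}$, the characterisation applied to $I$ and to $I'$ shows
\[
\bigl\{\lambda : I(\lambda)_{\min} \subset I \cap I'\bigr\} = \bigl\{\lambda \in \Delta^+_l : \lambda \curge \mu,\ \lambda \curge \mu'\bigr\},
\]
and a second application of the characterisation to $I \cap I'$ yields $\rt(I \cap I') = \nu$.

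The maximum-intersection formula and the surjectivity clause of part~(ii) hinge on what I expect to be the main technical obstacle: an ``interval property'' for the fibres,
\[
\Ab_\mu = \bigl\{J \in \Ab : I(\mu)_{\min} \subset J \subset I(\mu)_{\max}\bigr\}.
\]
This is not among the properties explicitly recalled from \cite{imrn}, and would probably require a closer look at the elementary-extension machinery. Granted it, one obtains the monotonicity $\mu \curge \mu' \Rightarrow I(\mu)_{\max} \subset I(\mu')_{\max}$, hence $I(\nu)_{\max} \subset I(\mu)_{\max} \cap I(\mu')_{\max}$, and the reverse inclusion supplied by part~(i) yields equality. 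For surjectivity, given $J \in \Ab_\nu$, I would set $I := J \cup I(\mu)_{\min}$ and $I' := J \cup I(\mu')_{\min}$: each lies inside the corresponding maximum ideal by maxima-monotonicity, hence is itself abelian, and the interval property places them in $\Ab_\mu$ and $\Ab_{\mu'}$ respectively. Distributivity then gives
\[
I \cap I' = J \cup (I(\mu)_{\min} \cap I(\mu')_{\min}) = J \cup I(\nu)_{\min} = J.
\]
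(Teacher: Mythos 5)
Your treatment of part (i) and of the formula $I(\mu)_{\min}\cap I(\mu')_{\min}=I(\nu)_{\min}$ is correct and is essentially the argument of Theorem~\ref{thm:sup-min}: the intersection of the minimal ideals is a nonzero abelian ideal contained in $\gH$, hence equals $I(\nu)_{\min}$ for a unique $\nu$, and the recalled monotonicity of $\lambda\mapsto I(\lambda)_{\min}$ identifies $\nu$ as the join; your reformulation of the rootlet as $\min\{\lambda\in\Delta^+_l : I(\lambda)_{\min}\subset J\}$ then disposes of part (i) cleanly. (One caveat: the antitonicity $I\subset J\Rightarrow \rt(I)\curge\rt(J)$ does not follow from the elementary-extension proposition as you suggest, since that proposition produces \emph{some} one-step enlargements rather than describing all of them; but this is \cite[Cor.~3.3]{imrn} and is fair to cite.)

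The genuine gap is in the second half of part (ii). You reduce both $I(\mu)_{\max}\cap I(\mu')_{\max}=I(\nu)_{\max}$ and the surjectivity claim to two ingredients --- the interval property $\Ab_\lambda=\{J : I(\lambda)_{\min}\subset J\subset I(\lambda)_{\max}\}$ and the monotonicity $\mu'\curle\mu\Rightarrow I(\mu)_{\max}\subset I(\mu')_{\max}$ --- and you assert that the latter follows from the former. It does not: knowing that each fibre is a full interval says nothing about how the tops of distinct intervals are nested (note, for instance, that the interval property forces $I(\mu')_{\min}\not\subset I(\mu)_{\max}$ for $\mu'\prec\mu$, so one cannot even compare the two maxima through a common sub-ideal with rootlet $\mu'$). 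The missing ingredient is a \emph{steered} extension result, Proposition~\ref{prop:long-ext}(i): for any $I\in\Ab_\mu$ and any long $\mu'\prec\mu$ there exists $I'\supset I$ with $I'\in\Ab_{\mu'}$. This is obtained by writing $\mu'$ as the image of $\mu$ under a chain of simple reflections, each lowering the level by one, and applying Proposition~\ref{prop:elem-ext} at every step so that the rootlet is driven to the \emph{prescribed} target $\mu'$; the weaker recalled fact (a non-simple rootlet admits some one-step lowering) cannot be steered, and a chain of such lowerings need not pass through $\mu'$. Applied to $I=I(\mu)_{\max}$ this yields the monotonicity of the maxima (Corollary~\ref{cor:1}(i)), after which your argument --- including the surjectivity construction $I=J\cup I(\mu)_{\min}$, which is slicker than the paper's chain-of-distinct-rootlets argument in Theorem~\ref{thm:sup-max}(iii) --- goes through; the interval property itself is \cite[Thm.~3.1(iii)]{imrn} and is legitimately citable. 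As written, however, the key lemma behind the $\max$-half of part (ii) is absent rather than merely deferred.
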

\noindent
The root $\nu$ occurring in (i) is denoted by $\mu\vee\mu'$.
In our approach, the existence of $\mu\vee\mu'$ \ ($\mu,\mu'\in \Delta^+_l$) comes up as a 
by-product of our theory of posets $\Ab_\mu$.
This prompts the natural question of whether `$\vee$' is well-defined for 
{\sl all\/} pairs of positive roots, not necessarily long.
The corresponding general  assertion is proved in the Appendix (see Theorem~\ref{thm-app2}). 
It seems that this property of root systems has not been noticed before.

In Section~\ref{sect:sovpad}, we give a characterisation of $\mu$-minimal abelian ideals that relates two different approaches to $\Ab$. We have associated the rootlet $\rt(I)\in
\Delta^+_l$ to a nonzero abelian ideal $I$. On the other hand, there is a bijection between
$\Ab$ and certain elements in the coroot lattice $Q^\vee$, which is due to Kostant and Peterson~\cite{ko98}.  Namely, 
\[
   \Ab \stackrel{1:1}{\longleftrightarrow} \eus Z_1=\{z\in Q^\vee \mid -1\le (z,\gamma)\le 2 \text{ for all }
\gamma\in \Delta^+\} .
\]
The element $z\in Q^\vee$ corresponding to $I\in\Ab$ is denoted by $z_I$. Our result is 
\begin{thm}  \label{thm:intr1}
For an abelian ideal $I$, we have

 $I=I(\mu)_{\min}$ for $\mu=\rt(I)$ if and only if\/ $\rt(I)^\vee=z_I$.
\end{thm}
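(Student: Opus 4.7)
The plan is to use the bijectivity of the Kostant--Peterson correspondence $I\mapsto z_I$ and reduce to the following claim: for every $\mu\in\Delta^+_l$, the coroot $\mu^\vee$ lies in $\eus Z_1$, and the abelian ideal assigned to it under K--P is precisely $I(\mu)_{\min}$. Bijectivity then yields both directions of the equivalence at once: if $z_I=\rt(I)^\vee=\mu^\vee$, injectivity of K--P forces $I$ to be the unique ideal with K--P invariant $\mu^\vee$, which is $I(\mu)_{\min}$; and conversely, starting from $I=I(\mu)_{\min}$ one reads off $z_I=\mu^\vee=\rt(I)^\vee$.

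I would first verify $\mu^\vee\in\eus Z_1$ by a direct root-system check: for $\gamma\in\Delta^+$ the integer $(\mu^\vee,\gamma)=2(\mu,\gamma)/(\mu,\mu)$ equals $2$ exactly when $\gamma=\mu$, and lies in $\{-1,0,1\}$ otherwise. This reduces to the standard table of inner products between two roots, splitting cases according to whether $\gamma$ is long or short. Now let $I^\star$ denote the K--P image of $\mu^\vee$. To identify $I^\star$ with $I(\mu)_{\min}$ I would establish two properties: (i) $\rt(I^\star)=\mu$, and (ii) $I^\star\subseteq\eus H$. By the third bullet in the introduction, (ii) characterises ideals of the form $I(\nu)_{\min}$, so combined with (i) it pins down $I^\star=I(\mu)_{\min}$.

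For (i), I would use the affine Weyl group realisation of K--P: the minuscule element of $\HW$ associated to $\mu^\vee$ is the pure translation $t_{\mu^\vee}$, and $\rt(I^\star)$ is recoverable as the minimal long root of $I^\star$ in the root order, which for this particular translation is $\mu$ itself. For (ii), the cleanest route is a cardinality check: compute $\#I^\star$ from the length of $t_{\mu^\vee}$ in $\HW$ (via Peterson's dictionary between K--P elements and lengths of minuscule affine elements) and compare with the known value $\#I(\mu)_{\min}=(\rho,\theta^\vee-\mu^\vee)+1$. Since $I^\star\in\Ab_\mu$ by (i) and $I(\mu)_{\min}$ is the unique minimum of the poset $\Ab_\mu$, equality of cardinalities forces $I^\star=I(\mu)_{\min}$, hence $I^\star\subseteq\eus H$.

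The principal obstacle is spelling out the dictionary that extracts the explicit set $I^\star\subseteq\Delta^+$ from the K--P datum $\mu^\vee$ (equivalently, from the translation $t_{\mu^\vee}$)---and in particular making the claim in (i) precise, that $\mu$ is the lowest long root of $I^\star$ in the root order. Once that identification is in place, both (i) and (ii) become concrete root-system computations with the inversion set of $t_{\mu^\vee}$.
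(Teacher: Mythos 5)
Your overall skeleton---verify $\mu^\vee\in\eus Z_1$, identify its Kostant--Peterson preimage with $I(\mu)_{\min}$, and invoke injectivity of $I\mapsto z_I$ for the other direction---is sound, and the injectivity step is exactly how the paper disposes of the ``only if'' half. But the forward identification rests on a false claim: the minuscule element whose K--P invariant is $\mu^\vee$ is \emph{not} the pure translation $t_{\mu^\vee}$. Pure translations by positive coroots are never minuscule: with the paper's convention $t_r(x)=x-(x,r)\delta$ one gets $t_{\mu^\vee}(\mu)=\mu-(\mu,\mu^\vee)\delta=\mu-2\delta$, so the level-zero root $\mu$ itself lies in $\eus N(t_{\mu^\vee})$, whereas a minuscule inversion set consists only of roots of the form $-\gamma+\delta$. (Equivalently, the paper shows that any minuscule $w=v\cdot t_r$ has $(\theta,r)=-2$, which is incompatible with $r=\mu^\vee$ for $\mu\in\Delta^+$.) The minuscule element in the coset $t_{\mu^\vee}W$ is $w=v_\mu s_0=t_{\mu^\vee}\cdot(v_\mu s_\theta)$, where $v_\mu\in W$ is the shortest element with $v_\mu(\theta)=\mu$; this is what the paper uses, reading off $z_{I(\mu)_{\min}}=v_\mu s_\theta(-\theta^\vee)=v_\mu(\theta^\vee)=\mu^\vee$ in one line.

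Because of this, both of your concrete verifications break down. The cardinality check fails since $\ell(t_{\mu^\vee})=(2\rho,\mu^\vee)$ has nothing to do with $\# I(\mu)_{\min}=(\rho,\theta^\vee-\mu^\vee)+1$: already for $\mu=\theta$ the ideal is $\{\theta\}$ of size $1$, while $\ell(t_{\theta^\vee})=2h-2$. And criterion (i) fails because the rootlet is in general \emph{not} the minimal long root of the ideal---it need not even belong to the ideal; e.g.\ in type $\GR{A}{n}$ the minimal elements of $I(\ap_i)_{\min}$ are $\esi_1-\esi_{i+1}$ and $\esi_i-\esi_{n+1}$, not $\ap_i$. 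The repair is precisely the computation with $w=v_\mu s_0$ quoted above (the paper cites the identification $w_{I(\mu)_{\min}}=v_\mu s_0$ from its earlier work); once $z_{I(\mu)_{\min}}=\mu^\vee$ is established, your injectivity argument does finish the proof, and the preliminary check that $\mu^\vee\in\eus Z_1$ (which genuinely uses that $\mu$ is long) becomes unnecessary.
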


\noindent We also prove that 
\\ \indent
\textbullet \quad an abelian ideal $I$ belongs to $\Ab_\mu$ if and only if 
$I\cap\gH=I(\mu)_{\min}$; 
\\ \indent
\textbullet \quad  $I(\mu)_{\max}\subset \{\nu\in\Delta^+\mid \nu\curge\mu\}$.
 
In Section~\ref{sect:central},  we consider the centralisers of abelian ideals. If $\ah\in\Ab$, 
then the centraliser $\z_\g(\ah)$ is a $\be$-stable subspace of $\g$. 
However, $\z_\g(\ah)$ is not always contained in $\be$. We give criteria for $\z_\g(\ah)$ to
be a nilpotent subalgebra or a sum of abelian ideals. We also prove

\begin{thm}  \label{thm:intr3}  Let $\ah\in\Ab$.
Then $\z_\g(\ah)$ is again an abelian ideal if and only if $\rt(\ah)\in \Pi_l$.
In particular, $\z_\g(\ah)=\ah$ if and only if  $\ah$ is a  maximal  ideal in $\Ab$.
\end{thm}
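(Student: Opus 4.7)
Let $\z := \z_\g(\ah)$ and $\mu := \rt(\ah)$. Since $\ah$ is abelian and $\be$-stable, $\ah \subseteq \z$ and $\z$ is $\be$-stable; hence ``$\z$ is an abelian ideal of $\be$'' amounts to the three conditions $\z \cap \te = 0$, $\z \cap \ut^- = 0$, and $\z \cap \ut$ abelian. The ``in particular'' statement is a formal consequence of the main equivalence: if $\ah$ is maximal abelian then $\mu \in \Pi_l$ and $\ah = I(\mu)_{\max}$, so the ``$\Leftarrow$'' direction yields $\z = \ah$; conversely $\z = \ah$ forces every abelian ideal $\ah' \supseteq \ah$ into $\z = \ah$, giving maximality. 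So it remains to establish the main equivalence.

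For the ``$\Leftarrow$'' direction, assume $\mu \in \Pi_l$. Because $\mu$ is simple, no strict abelian extension of $\ah$ can drop the rootlet below $\mu$ (by Proposition~\ref{prop:elem-ext}), so $\ah$ lies in the unique maximum $I(\mu)_{\max}$, which is globally maximal in $\Ab$. Abelianness of $I(\mu)_{\max}$ gives $I(\mu)_{\max} \subseteq \z$. For the reverse inclusion $\z \subseteq I(\mu)_{\max}$, by antimonotonicity of $\z_\g(\cdot)$ and the sandwich $I(\mu)_{\min} \subseteq \ah \subseteq I(\mu)_{\max}$, it suffices to check $\z_\g(I(\mu)_{\min}) \subseteq I(\mu)_{\max}$. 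Writing $I := I(\mu)_{\min}$, this splits into three root-theoretic verifications: (a) the roots in $I$ span $\te^*$, so $\z_\g(I) \cap \te = 0$; (b) for every $\gamma \in \Delta^+ \setminus I$, some $\delta \in I$ has $\delta - \gamma \in \Delta$, killing $\z_\g(I) \cap \ut^-$; (c) for every $\beta \in \Delta^+ \setminus I(\mu)_{\max}$, some $\delta \in I$ has $\beta + \delta \in \Delta$, placing $\z_\g(I) \cap \ut$ inside $I(\mu)_{\max}$. These should follow from the description of $I$ via the Kostant--Peterson identification $z_I = \mu^\vee$ (Theorem~\ref{thm:intr1}) together with $I \subseteq \eus H$; the roots of $I$ are recovered from $(\mu^\vee,\cdot)=2$, and pairings against $\mu^\vee$ drive each root-string computation.

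For the ``$\Rightarrow$'' direction, we argue by contrapositive: assume $\mu \notin \Pi_l$, so $\mu$ is long and non-simple. Iterating Proposition~\ref{prop:elem-ext}, $\ah$ is strictly contained in some maximal abelian ideal $\ah^\ast := I(\alpha)_{\max}$ with $\alpha \in \Pi_l$, $\alpha \prec \mu$, and by the already-proved ``$\Leftarrow$'' direction applied to $\ah^\ast$, $\z_\g(\ah^\ast) = \ah^\ast$. The key step is to exhibit some $x \in \z \setminus \ah^\ast$, witnessing the strict containment $\z \supsetneq \z_\g(\ah^\ast)$; such $x$ takes one of the forms $h \in \te \setminus \{0\}$ annihilating every root of $\ah$, $e_{-\delta}$ with $\delta \in \ah^\ast \setminus \ah$ and $\delta - \gamma \notin \Delta$ for every $\gamma \in \ah$, or $e_\beta$ with $\beta \in \Delta^+ \setminus \ah^\ast$ and $\beta + \gamma \notin \Delta$ for every $\gamma \in \ah$. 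Once such $x$ exists: if $x \notin \ut$ then $\z \not\subseteq \ut$; if $x \in \ut \setminus \ah^\ast$ then $x \notin \z_\g(\ah^\ast)$ forces $[x,\ah^\ast] \ne 0$, a nonzero bracket inside $\z$. Either way, $\z$ is not an abelian ideal of $\be$. The principal obstacle is the combinatorial heart of both directions---verifying (a)--(c) for ``$\Leftarrow$'' and producing the witness $x$ for ``$\Rightarrow$''---and in particular must handle cases (for instance $G_2$ with $\mu = 3\alpha_1 + \alpha_2$) where $\ah$ lies inside a unique maximal abelian ideal yet $\z \cap \ut$ already strictly exceeds it.
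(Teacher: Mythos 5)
Your outline reproduces the paper's architecture faithfully: the reduction of the ``$\Leftarrow$'' direction to showing $\z_\g(\ah(\ap)_{\min})=\ah(\ap)_{\max}$, the splitting into conditions (a)--(c), and the use of the globally maximal ideals for the converse all match the paper's proof. But the proposal stops exactly where the real work begins, and at those points it gestures at arguments that do not exist (or are false as stated). Condition (c) --- that every $\beta\in\max\bigl(\Delta^+\setminus I(\ap)_{\max}\bigr)$ admits $\delta\in I(\ap)_{\min}$ with $\beta+\delta\in\Delta$ --- is precisely Theorem~\ref{thm:stunning}, which the paper can only establish by case-by-case inspection (it even exhibits the much stronger fact that $\min\bigl(I(\ap)_{\min}\bigr)$ and $\max\bigl(\Delta^+\setminus I(\ap)_{\max}\bigr)$ pair off with sum $\theta$). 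Your claim that this ``should follow from'' the identification $z_I=\mu^\vee$ via root-string computations is unsubstantiated, and the auxiliary assertion that the roots of $I(\mu)_{\min}$ are recovered from $(\mu^\vee,\cdot)=2$ is simply wrong: already in $\GR{A}{2}$ with $\mu=\ap_1$ one has $I(\ap_1)_{\min}=\{\ap_1,\theta\}$ while $(\ap_1^\vee,\theta)=1$. Condition (a) is also not free; it is Lemma~\ref{lm:full-rang} combined with Lemma~\ref{lm:4}, the latter requiring the observation that any reduced word for $v_\ap$ involves all simple reflections from $\Pi\setminus\{\ap\}$.

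The ``$\Rightarrow$'' direction has the same defect: you correctly isolate the need for a witness $x\in\z_\g(\ah)$ violating abelianness, and correctly flag that the hard case is a non-simply-laced $\ah$ lying in a \emph{unique} maximal abelian ideal with $\rt(\ah)\notin\Pi_l$, but you never produce $x$. The paper does: it first passes through the intermediate condition $\z_\g(\ah)=\ah(\ap)_{\max}$ (using that $\z_\g(\ah)$ contains the sum of all maximal abelian ideals containing $\ah$), handles the simply-laced case by Lemma~\ref{lm:3} (connectedness of supports forces a non-simple long rootlet to dominate two long simple roots), and in the non-simply-laced case takes $x=\g_{[\theta/2]}$: if $\rt(\ah)$ dominates a short simple root then $\rt(\ah)\not\curle|\Pi_l|$, hence $\theta-[\theta/2]\notin I_\ah$ by Example~\ref{ex:all-maximal}, so the noncommutative root $[\theta/2]$ lies in $I_{\z_\g(\ah)}$ and Lemma~\ref{lm:sum-abelian} applies. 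Without this (or some replacement), your contrapositive is an announcement of what must be shown rather than a proof, so the proposal has a genuine gap at the combinatorial core of both implications.
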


\noindent
In fact, Theorem~\ref{thm:intr3} is closely related to the following interesting observation.
For any $\gS\subset \Delta^+$, let $\min(\gS)$ and $\max(\gS)$ denote the sets of minimal
and maximal elements of $\gS$, respectively.

\begin{thm}  \label{thm:intr4}
For every $\ap\in\Pi_l$, there is a one-to-one correspondence between
$\min\bigl( I(\ap)_{\min}\bigr)$ and $\max \bigl(\Delta^+\setminus I(\ap)_{\max}\bigr)$.
Namely, if
$\nu \in \min\bigl( I(\ap)_{\min}\bigr)$, then $\theta-\nu\in 
\max \bigl(\Delta^+\setminus I(\ap)_{\max}\bigr)$
; and vice versa. In particular, $\max \bigl(\Delta^+\setminus I(\ap)_{\max}\bigr)\subset \gH$. 
\end{thm}

An analogous statement for arbitrary long roots (in place of $\ap\in\Pi_l$) is not true. 
However, there is a modification of Theorem~\ref{thm:intr4} 
that applies to the connected subsets of $\Pi_l$, 
see Theorem~\ref{thm:modification}. 

\noindent
We refer to \cite{bour,hump} for standard results on root systems and (affine) Weyl groups.

{\small
{\bf Acknowledgements.} This work was done during my visits to CRC~701 at
the Universit\"at Bielefeld and 
Max-Planck-Institut f\"ur Mathematik (Bonn). I thank both Institutions for
the hospitality  and support. 
I am grateful to E.B.\,Vinberg for fruitful discussions related to results in Appendix~\ref{app:A}.
Thanks are also due to the anonymous referee for providing the outline of a uniform proof of 
Theorem~\ref{thm:intr4}.
}

\section{Preliminaries on abelian ideals and minuscule elements}
\label{sect:odin}

\noindent
Throughout this paper, $\Delta$ is the root system of $(\g,\te)$ with positive roots 
$\Delta^+$ corresponding to $\ut$, simple roots $\Pi=\{\ap_1,\dots,\ap_n\}$, and
Weyl group $W$. Set $\Pi_l:=\Pi\cap \Delta^+_l$.
We equip  $\Delta^+$  with the usual partial ordering `$\curle$'.
This means that $\mu\curle\nu$ if $\nu-\mu$ is a non-negative integral linear combination
of simple roots. Write $\mu\prec\nu$ if $\mu\curle\nu$ and $\mu\ne\nu$.

If $\ah$ is an abelian ideal of $\be$, then $\ah$ is a sum of certain root spaces in $\ut$,  i.e.,
$\ah=\bigoplus_{\gamma\in I_\ah}\g_\gamma$.  The relation $[\be,\ah]\subset \ah$ is equivalent to
that $I=I_\ah$ is an {\it upper ideal\/} of the poset $(\Delta^+, \curle)$, i.e., 
if $\nu\in I$, $\gamma\in\Delta^+$, and $\nu\curle \gamma$, then $\gamma\in I$.
The property of being abelian means that
$\gamma'+\gamma''\not\in \Delta^+$ for all $\gamma',\gamma''\in I$.
We often work in the setting of root systems, so that a $\be$-ideal $\ah\subset\ut$
is being identified with the corresponding subset $I$ of positive roots. 

The theory of abelian ideals  relies on the relationship, due to Peterson, between the 
abelian ideals and the so-called {\it minuscule elements\/} of the affine Weyl 
group of $\Delta$. Recall the necessary setup.

We have the vector space $V=\oplus_{i=1}^n{\mathbb R}\ap_i$, 
the  Weyl group $W$ generated by  simple reflections
$s_1,\dots,s_n$,  and a $W$-invariant inner product $(\ ,\ )$ on $V$. 
Letting $\widehat V=V\oplus {\mathbb R}\delta\oplus {\mathbb R}\lb$, we extend
the inner product $(\ ,\ )$ on $\widehat V$ so that $(\delta,V)=(\lb,V)=
(\delta,\delta)= (\lb,\lb)=0$ and $(\delta,\lb)=1$. Set  $\ap_0=\delta-\theta$, where
$\theta$ is the highest root in $\Delta^+$. 
Then

\begin{itemize}
\item[] \ 
$\widehat\Delta=\{\Delta+k\delta \mid k\in {\mathbb Z}\}$ is the set of affine
(real) roots; 
\item[] \ $\HD^+= \Delta^+ \cup \{ \Delta +k\delta \mid k\ge 1\}$ is
the set of positive affine roots; 
\item[] \ $\HP=\Pi\cup\{\ap_0\}$ is the corresponding set
of affine simple roots;
\item[] \  $\mu^\vee=2\mu/(\mu,\mu)$ is the coroot corresponding to 
$\mu\in \widehat\Delta$;
\item[] \   $Q=\oplus _{i=1}^n {\mathbb Z}\ap_i$  
is the {\it root lattice\/}  and $Q^\vee=\oplus _{i=1}^n {\mathbb Z}\ap_i^\vee$  
is the {\it coroot lattice\/} in $V$.
\end{itemize}
 
\noindent
For each $\ap_i\in \HP$, let $s_i$ denote the corresponding reflection in $GL(\HV)$.
That is, $s_i(x)=x- (x,\ap_i)\ap_i^\vee$ for any $x\in \HV$.
The affine Weyl group, $\HW$, is the subgroup of $GL(\HV)$
generated by the reflections $s_0,s_1,\dots,s_n$.
The extended inner product $(\ ,\ )$ on $\widehat V$ is $\widehat W$-invariant. 
The {\it inversion set\/} of $w\in\HW$ is $\eus N(w)=\{\nu\in\HD^+\mid w(\nu)\in -\HD^+\}$.

Following Peterson, we say that $w\in \HW$ is  {\it minuscule\/}, if 
$\eus N(w)=\{-\gamma+\delta\mid \gamma\in I_w\}$ 
for some subset $I_w\subset \Delta$.
One then proves that {\sf (i)} $I_w\subset \Delta^+$, {\sf (ii)} $I_w$ is an abelian ideal, and
{\sf (iii)} the assignment 
$w\mapsto I_w$ yields a bijection between the minuscule elements of
$\HW$ and the abelian ideals, see \cite{ko98},  \cite[Prop.\,2.8]{cp1}. 
Accordingly, if $I\in\Ab$, then $w_I$ denotes the corresponding minuscule
element of $\HW$. Obviously, $\# I=\#\eus N(w_I)=\ell(w_I)$, where $\ell$ is the usual length function on $\HW$.

Using minuscule elements of $\HW$, one can assign an element of $Q^\vee$ 
to any abelian ideal~\cite{ko98}. 
In fact, one can associate an element of $Q^\vee$ to any $w\in\HW$.
The following is exposed in a more comprehensive form  in \cite[Sect.\,2]{losh}.

\noindent
Recall that $\HW$ is a semi-direct product of $W$ and $Q^\vee$, and it can also be regarded as
a group of affine-linear transformations of $V$ \cite[4.2]{hump}.
For any $w\in \HW$, there is a unique decomposition
\beq       \label{eq:decomp-w}
   w=v{\cdot}t_r,
\eeq
where $v\in W$ and $t_r$ is the translation of $V$ corresponding to $r\in Q^\vee$, i.e., 
$t_r\ast x=x+r$ for all $x\in V$.
Then we assign the element $v(r)\in Q^\vee$ to  $w\in\HW$.
An alternative way for doing so, which does not explicitly use the semi-direct product 
structure, is based on the relation between 
the linear $\HW$-action on $\HV$ and  decomposition \eqref{eq:decomp-w}.
Given $w\in\HW$, 
define the integers $k_i$, $i=1,\dots,n$,  by the formula
$w^{-1}(\ap_i)=\mu_i+k_i\delta$ ($\mu_i\in \Delta$).
Then $v(r)\in Q^\vee$ is determined by the conditions  that $(v(r),\ap_i)=k_i$.
The reason is that $w^{-1}=v^{-1}{\cdot}t_{-v(r)}$ and 
the linear $\HW$-action on $\HV$ satisfies the following relation
\beq   \label{eq:general-aff-lin}
    w^{-1}(x)=v^{-1}(x)+(x,v(r))\delta  \quad \forall x\in V\oplus\BR\delta .
\eeq
It suffices to verify that $t_r(x)=x-(x,r)\delta$.

If $w=w_I$ is minuscule, then we also write $z_I$ for the resulting element of $Q^\vee$. 
By \cite[Theorem\,2.5]{ko98}, the mapping
$I \mapsto z_I\in V$ sets up a bijection between $\Ab$
and   $\eus Z_1=\{ z\in Q^\vee \mid (z,\gamma)\in \{-1,0,1,2\}  \quad \forall \gamma\in \Delta^+\}$.  
A proof of this result is given in \cite[Appendix~A]{subsets}.

Given $I\in\Abo$ and the corresponding non-trivial minuscule element $w_I\in\HW$, 
the {\it rootlet\/} of $I$ is defined by 
\[
   \rt(I)=w_I(\ap_0)+\delta=w_I(2\delta-\theta) .
\]
By \cite[Prop.\,2.5]{imrn}, we have $\rt(I)\in \Delta^+_l$.
The next result  describes a procedure for extensions of abelian ideals. Namely, if 
the rootlet of $I=I_w$ is not simple, then one can construct a larger ideal $I'$ such that 
$\# I'=\# I+1$ and $\rt(I')=s_\ap(\rt(I))\prec \rt(I)$ for some $\ap\in\Pi$.

\begin{prop}           \label{prop:elem-ext}
Let $w\in\HW$ be minuscule and $\mu=\rt(I_w)$.
Suppose that $\mu\not\in\Pi$ and 
take any $\ap\in\Pi$ such that $(\ap,\mu)>0$. Then $s_\ap w$ is again minuscule.
Moreover, the only root in $I_{s_\ap w}\setminus I_w$  belongs to $\gH$.
\end{prop}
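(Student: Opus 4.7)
My plan is to establish three facts: that $\ell(s_\ap w) = \ell(w)+1$, so $s_\ap w$ has exactly one new inversion, namely $w^{-1}(\ap)$; that this new inversion takes the minuscule form $\delta - \gamma'$ for some $\gamma' \in \Delta^+$; and that $\gamma'$ lies in $\gH$.

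For the length step I argue by contradiction. If $w^{-1}(\ap) \in -\HD^+$, then $-w^{-1}(\ap) \in \eus N(w) = \{\delta - \gamma : \gamma \in I_w\}$ by the minuscule hypothesis, so $w^{-1}(\ap) = \gamma - \delta$ for some $\gamma \in I_w$. Using $\HW$-invariance of $(\,,\,)$ and the identity $w^{-1}(\mu) = 2\delta - \theta$ (obtained from $\mu = w(\ap_0) + \delta$ together with $w(\delta)=\delta$), one computes
\[
(\ap, \mu) \;=\; (w^{-1}(\ap),\, w^{-1}(\mu)) \;=\; (\gamma - \delta,\, 2\delta - \theta) \;=\; -(\gamma, \theta) \;\leq\; 0,
\]
since $\theta$ is dominant and $\gamma \in \Delta^+$. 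This contradicts $(\ap,\mu) > 0$, so $w^{-1}(\ap) \in \HD^+$ and $\eus N(s_\ap w) = \eus N(w) \cup \{w^{-1}(\ap)\}$.

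To identify the new root I would use the semidirect decomposition $w = v \cdot t_r$ ($v \in W$, $r \in Q^\vee$) and formula \eqref{eq:general-aff-lin}: $w^{-1}(\ap) = v^{-1}(\ap) + (\ap, z_I)\,\delta$, where $z_I = v(r) \in \eus Z_1$. The same formula applied to $\mu$ yields $v(\theta) = -\mu$; hence $(v^{-1}(\ap), \theta) = -(\ap,\mu) < 0$, which forces $v^{-1}(\ap) \in -\Delta^+$ because $\theta$ is dominant. Setting $\gamma' := -v^{-1}(\ap) \in \Delta^+$ yields $(\gamma', \theta) = (\ap,\mu) > 0$, so $\gamma' \in \gH$. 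Thus $w^{-1}(\ap) = -\gamma' + k\delta$ with $k = (\ap, z_I) \in \{-1,0,1,2\}$, and $k \geq 1$ by the length step. So $k \in \{1, 2\}$.

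The main obstacle is to rule out $k = 2$, for in that case $w^{-1}(\ap) = 2\delta - \gamma'$ has the wrong $\delta$-height and $s_\ap w$ would not be minuscule. Here I would appeal to the classification of minuscule elements by their rootlets given in \cite[Thm.\,2.6]{imrn}: its content entails that among simple roots $\ap \in \Pi$, the equality $(\ap, z_I) = 2$ can hold only when $\ap = \mu = \rt(I_w)$. The hypothesis $\mu \notin \Pi$ excludes this, so $k = 1$. Therefore $w^{-1}(\ap) = \delta - \gamma'$, $\eus N(s_\ap w) \subset \delta - \Delta^+$, and $s_\ap w$ is minuscule with $I_{s_\ap w} = I_w \cup \{\gamma'\}$; the unique new root $\gamma'$ lies in $\gH$, as required.
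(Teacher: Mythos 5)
Your first two steps are sound: the computation $(\ap,\mu)=(w^{-1}(\ap),w^{-1}(\mu))=-(\gamma,\theta)\le 0$ correctly forces $w^{-1}(\ap)\in\HD^+$, and the identities $v^{-1}(\mu)=-\theta$, $v(\theta)=-\mu$ correctly place $\gamma'=-v^{-1}(\ap)$ in $\Delta^+\cap\gH$ and give $w^{-1}(\ap)=-\gamma'+k\delta$ with $k\in\{1,2\}$. The genuine gap is exactly at what you yourself call the main obstacle. Excluding $k=2$ \emph{is} the content of the proposition --- up to that point you have only shown $\ell(s_\ap w)=\ell(w)+1$, not minusculeness --- and you dispose of it by asserting that \cite[Thm.\,2.6]{imrn} ``entails'' that $(\ap,z_I)=2$ can occur for $\ap\in\Pi$ only when $\ap=\rt(I_w)$. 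But that theorem is precisely the source of the extension statement which the present proposition is re-proving explicitly (the Introduction says it is ``implicit in \cite[Thm.\,2.6]{imrn}, cf.\ also Proposition~\ref{prop:elem-ext}''), so invoking it here is circular; and you give no derivation of the asserted constraint on $(\ap,z_I)$ from it. The fact you need is true, but it is not a formality: one way to prove it is to show that for minuscule $w=v{\cdot}t_r$ one has $(\beta,r)\ge -2$ for every $\beta\in\Delta^+$, with equality forcing $\beta\in I_w$, and that $(\gamma',r)=-2$ together with $\gamma'\in\gH$, $\gamma'\ne\theta$ would give $v(\theta-\gamma')=\ap-\mu\in\Delta^+$, contradicting the simplicity of $\ap$. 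None of this is in your write-up.

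The paper closes this gap with one extra root, and you could splice the same trick into your framework. Since $(\ap,\mu)>0$ and $\mu\ne\ap$ (as $\mu\notin\Pi$), the element $\mu''=\mu-\ap$ is a positive root and $w^{-1}(\mu'')+w^{-1}(\ap)=2\delta-\theta$; writing $w^{-1}(\mu'')=k'\delta-\mu_1$ and $w^{-1}(\ap)=(2-k')\delta-\mu_2$ with $\mu_1,\mu_2\in\Delta^+$ and $\mu_1+\mu_2=\theta$, one observes that if either $\delta$-coefficient were $\le 0$, then $\mu_1$ (resp.\ $\mu_2$) would lie in $\eus N(w)$ while having $\delta$-coefficient $0$, contradicting minusculeness; hence both coefficients equal $1$. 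In your notation: if $k=2$, then $w^{-1}(\mu-\ap)=-(\theta-\gamma')$ with $\theta-\gamma'\in\Delta^+$, so $\theta-\gamma'\in\eus N(w)$, which is impossible for a minuscule $w$. Replacing your citation by this two-line argument would make the proof complete and essentially equivalent to the paper's.
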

\begin{proof}
Set $\mu'=s_\ap(\mu)=s_\ap w(2\delta-\theta)$ and $\mu''=\mu-\ap$. (Note that
$\mu'=\mu''$ if and only if $\ap\in\Pi_l$).  Then $w(2\delta-\theta)=\mu''+\ap$ and
$w^{-1}(\mu'')+w^{-1}(\ap)=2\delta-\theta$. Therefore,

$\begin{cases} w^{-1}(\mu'')=k\delta-\mu_1 & \\
  w^{-1}(\ap)=(2-k)\delta-\mu_2 & \end{cases}$, where $\mu_1,\mu_2\in \Delta$ and 
$\mu_1+\mu_2=\theta$. 
\\[.6ex]
This clearly implies that both $\mu_1$ and $\mu_2$ are positive and
hence $\mu_1,\mu_2\in \gH$.
Furthermore, since $w$ is minuscule, both $w^{-1}(\mu'')$ and $w^{-1}(\ap)$ must be 
positive. [Indeed, if, say, $w^{-1}(\mu'')$ is negative, then $k\le 0$. Hence $w(\mu_1)=
k\delta-\mu''$ is negative and $\mu_1\in\eus N(w)$, which contradicts the definition of 
minuscule elements.]
Therefore, one must have $k=1$. Then 
$w(\delta-\mu_2)=\ap\in\Pi$. Since $\eus N(s_\ap w)=\eus N(w)\cup \{w^{-1}(\ap)\}$,
we then conclude that 
$s_\ap w$ is minuscule and the corresponding abelian ideal is
$I_{s_\ap w}=I_w\cup\{\mu_2\}$. 
\\
Note also that $\rt(I_{s_\ap w})=\mu'\prec \mu$.
\end{proof}

\section{Intersections of abelian ideals and posets $\Ab_\mu$}
\label{sect:intersect} 

\noindent
In this section, we prove that taking intersection of abelian ideals is compatible with partition~\eqref{eq:parti}.

First of all, we notice that for any collection of non-empty abelian ideals (subsets of $\Delta^+$)
their intersection
is non-empty, since all these ideals contain the highest root $\theta$. In particular, 
if $\mu_1,\dots,\mu_s\in \Delta^+_l$, then
\[
   I=\bigcap_{i=1}^s I(\mu_i)_{\min}
\]
is again an abelian ideal. Since $I(\mu_i)_{\min}\subset \gH$ for all $i$, we have 
$I\subset \gH$, and therefore $I=I(\mu)_{\min}$ for certain $\mu\in \Delta^+_l$ \cite[Thm.\,4.3]{imrn}.
Since $I(\mu)_{\min}\subset I(\mu_i)_{\min}$, we conclude that $\mu\curge\mu_i$
\cite[Cor.\,3.3]{imrn}.

On the other hand, if $\gamma\in\Delta^+_l$ and $\gamma\curge\mu_i$ for all $i$, then 
$I(\gamma)_{\min}\subset I(\mu_i)_{\min}$ \cite[Thm.\,4.5]{imrn}. Therefore, $I(\gamma)_{\min}\subset I(\mu)_{\min}$, i.e., 
$\gamma\curge\mu$.  Thus, we have proved

\begin{thm}  \label{thm:sup-min}
For any collection $\mu_1,\dots,\mu_s\in \Delta^+_l$, 

{\sf (i)} \ there exists a unique long root $\mu$ such that $\mu\curge\mu_i$ for all $i$, and if $\gamma\in\Delta^+_l$ and $\gamma\curge\mu_i$
for all $i$, then $\gamma\curge\mu$;

{\sf (ii}) \ $\bigcap_{i=1}^s I(\mu_i)_{\min}=I(\mu)_{\min}$.
\end{thm}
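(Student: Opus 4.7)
The plan is to prove part (ii) first and then deduce (i) as a formal consequence of the order-reversing correspondence $\mu\mapsto I(\mu)_{\min}$ between $\Delta^+_l$ and $\gH$-contained minimal abelian ideals, both of which are recalled in the introduction.

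For (ii), set $I := \bigcap_{i=1}^s I(\mu_i)_{\min}$. I first verify that $I$ is again a nonzero abelian ideal: it is an upper ideal of $(\Delta^+,\curle)$ as an intersection of upper ideals; it is abelian because the abelian condition passes to subsets; and it is nonempty, since $\theta$ lies in every nonzero abelian ideal and therefore in each $I(\mu_i)_{\min}$. The crucial observation is that $I\subset \gH$, which is immediate from $I(\mu_i)_{\min}\subset \gH$ for every $i$. By \cite[Thm.\,4.3]{imrn}, which characterises $\mu$-minimal ideals as precisely the nonzero abelian ideals contained in $\gH$, there is a uniquely determined $\mu\in\Delta^+_l$ such that $I=I(\mu)_{\min}$. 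This is exactly (ii).

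To derive (i), I invoke the order-reversing property of $\mu\mapsto I(\mu)_{\min}$ from \cite[Cor.\,3.3]{imrn}. The inclusions $I(\mu)_{\min}\subset I(\mu_i)_{\min}$ translate into $\mu\curge\mu_i$ for every $i$, producing a common upper bound. For the universal property, suppose $\gamma\in\Delta^+_l$ satisfies $\gamma\curge\mu_i$ for all $i$. Then \cite[Thm.\,4.5]{imrn} yields $I(\gamma)_{\min}\subset I(\mu_i)_{\min}$ for each $i$, hence $I(\gamma)_{\min}\subset I=I(\mu)_{\min}$, and the order-reversing bijection returns $\gamma\curge\mu$. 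Uniqueness of $\mu$ is automatic from the antisymmetry of the root order.

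Given the three inputs from \cite{imrn}---the containment $I(\mu)_{\min}\subset \gH$, the characterisation of minimal ideals by this containment, and the order-reversing property of $\mu\mapsto I(\mu)_{\min}$---there is no genuine obstacle. The only fact to check by hand is that the intersection of abelian ideals is a nonempty abelian upper ideal, and this follows at once from $\theta\in I(\mu_i)_{\min}$ for all $i$ together with the trivial stability of the ideal and abelian conditions under intersection.
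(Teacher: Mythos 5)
Your proof is correct and follows essentially the same route as the paper: both reduce everything to the fact that $\bigcap_i I(\mu_i)_{\min}$ is a nonzero abelian ideal contained in $\gH$, invoke \cite[Thm.\,4.3]{imrn} to identify it as some $I(\mu)_{\min}$, and then use \cite[Cor.\,3.3]{imrn} and \cite[Thm.\,4.5]{imrn} for the two directions of the order-reversing correspondence. No substantive difference.
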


The root $\mu$ occurring in part (i) is denoted by $\mu_1\vee\ldots\vee\mu_s
=\vee_{i=1}^s\mu_i$. We also say that $\mu$ is the {\it least upper bound\/} or
{\it join\/} of $\mu_1,\dots,\mu_s$.

\begin{rmk} 
Clearly, the operation `$\vee$' is associative, and 
it suffices to  describe the least upper bound for only  two (long) roots.
In Appendix~\ref{app:A}, we prove directly that the join exists for {\sl all\/} pairs of roots, not 
necessarily long ones, and
give an explicit formula for it.
\end{rmk}

We are going to play the same game with arbitrary ideals in $\Ab_{\mu_i}$. 
To this end, we need an analogue of
\cite[Thm.\,4.5]{imrn} for the $\mu$-maximal ideals, see Corollary~\ref{cor:1}(i) below. 
This can be achieved as follows.

\begin{prop}                  \label{prop:long-ext}
Let $\mu,\mu'$ be long roots such that $\mu'\prec \mu$. Then
\begin{itemize}
\item[{\sf (i)}] \ for any $I\in\Ab_\mu$, there exists $I'\subset \Ab_{\mu'}$ such that 
$I'\supset I$ and $\# I'=\# I+(\rho,\mu^\vee-{\mu'}^\vee)$;
\item[{\sf (ii)}] \ moreover, if $I=I_0\subset I_1\subset \ldots\subset I_m=I'$ is any chain of 
ideals with $m=(\rho,\mu^\vee-{\mu'}^\vee)$ and 
$\# I_j=\# I_{j-1}+1$, then $\rt(I_j)\ne \rt(I_{j-1})$ for all $j$.
\end{itemize}
\end{prop}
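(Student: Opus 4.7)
The plan is to prove (i) by induction on $m:=(\rho,\mu^\vee-\mu'^\vee)$, using Proposition~\ref{prop:elem-ext} as the single-step device, and then to deduce (ii) from a uniform cost telescoping along any chain.

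For (i), the inductive step hinges on the following assertion: \emph{given long positive roots $\mu\succ\mu'$, there exists $\alpha\in\Pi$ with $(\alpha,\mu)>0$ such that $s_\alpha(\mu)\curge\mu'$}. Granting this, Proposition~\ref{prop:elem-ext} produces $I_1\supset I$ with $\#I_1=\#I+1$ and $\rt(I_1)=s_\alpha(\mu)\in\Delta^+_l$; since each simple reflection contributes exactly $1$ to $(\rho,\cdot^\vee)$ (see the computation below), the pair $(s_\alpha(\mu),\mu')$ has $(\rho,s_\alpha(\mu)^\vee-\mu'^\vee)=m-1$, and the induction closes. To establish the assertion, expand $\mu-\mu'=\sum_i c_i\alpha_i$ with $c_i\in\mathbb{Z}_{\ge 0}$. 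Since $\mu\ne\mu'$ are distinct long positive roots, $(\mu,\mu')\le 1$, and so $(\mu-\mu',\mu)=2-(\mu,\mu')\ge 1$; hence some index $i$ has $c_i>0$ and $(\alpha_i,\mu)>0$. The condition $s_{\alpha_i}(\mu)\curge\mu'$ amounts to $c_i\ge(\mu,\alpha_i^\vee)$: if $\alpha_i$ is long, $(\mu,\alpha_i^\vee)=1$ and this is automatic; if $\alpha_i$ is short (types $B,C,F,G$), the required $c_i\ge 2$ (resp.\ $c_i\ge 3$ in $G_2$) follows from the parity-type constraint that $|\mu-\mu'|^2$ is an even integer and $\mu'$ has the same length as $\mu$, which forces the short-root coefficients of $\mu-\mu'$ to be positive multiples of the squared length-ratio.

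For (ii), I observe that every rootlet-changing elementary extension contributes exactly $1$ to $(\rho,\cdot^\vee)$. Indeed, whenever $\nu,s_\alpha(\nu)\in\Delta^+_l$ with $\alpha\in\Pi$ and $(\alpha,\nu)=1$, we have $s_\alpha(\nu)^\vee=s_\alpha(\nu^\vee)$, so
\[
(\rho,\nu^\vee-s_\alpha(\nu)^\vee)=(\nu^\vee,\alpha^\vee)(\rho,\alpha)=(\nu,\alpha)=1,
\]
using $(\rho,\alpha^\vee)=1$ for simple $\alpha$, $|\nu|^2=2$, and the fact that $(\nu,\alpha)\in\{0,\pm 1\}$ between a long root and any distinct simple root. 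A rootlet-preserving step contributes $0$. Telescoping along an arbitrary chain $I=I_0\subset\cdots\subset I_m=I'$ with $\#I_j=\#I_{j-1}+1$,
\[
m=(\rho,\mu^\vee-\mu'^\vee)=\sum_{j=1}^m(\rho,\rt(I_{j-1})^\vee-\rt(I_j)^\vee)=\#\{j:\rt(I_j)\ne\rt(I_{j-1})\},
\]
forcing every step to be rootlet-changing, which is exactly (ii).

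I expect the main obstacle to be the short-root case of the key assertion in (i): verifying $c_i\ge(\mu,\alpha_i^\vee)$ when the only available $\alpha_i$ is short requires extracting the parity constraint on the expansion of $\mu-\mu'$, which is genuinely type-dependent even though the final conclusion is uniform.
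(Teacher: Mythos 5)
Your proposal is correct and follows essentially the same route as the paper: descend from $\mu$ to $\mu'$ by a chain of simple reflections $s_{\alpha}$ with $(\alpha,\mu)>0$, each dropping $(\rho,\cdot^\vee)$ by exactly $1$, apply Proposition~\ref{prop:elem-ext} repeatedly to get (i), and for (ii) telescope the level $(\rho,\rt(\cdot)^\vee)$ along an arbitrary chain, using that each covering step of abelian ideals is an affine simple reflection whose contribution is $0$ (for $s_0$, or when the rootlet is fixed) or exactly $1$ (the paper's proof is the same count, citing its Prop.~3.2 of the earlier work for the $s_0$ case). The one soft spot is your justification of $c_i\ge(\mu,\alpha_i^\vee)$ when $\alpha_i$ is short: the evenness of $|\mu-\mu'|^2$ is a single scalar constraint and does not by itself force divisibility of individual coefficients; what you actually need is the standard fact that every \emph{long} root has all of its coefficients on short simple roots divisible by the squared length ratio (Bourbaki, Ch.~VI, \S1, Ex.~20 --- the very fact the paper invokes in Appendix~\ref{app:A}), applied to $\mu$ and $\mu'$ separately and then to their difference.
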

\begin{proof}
If $\mu\not\in\Pi_l$ and $\ap\in\Pi$ with $(\ap,\mu)>0$, then a direct calculation shows
that  $(\rho,\mu^\vee-s_\ap(\mu)^\vee)=1$. [Use the relations $(\rho,\ap^\vee)=1$ and
$(\ap,\mu^\vee)=1$.]

(i) \ Arguing by induction, one readily proves that if $\mu,\mu'$ are both long and 
$\mu'\prec\mu$, then $\mu'$ can be reached
from $\mu$ by a sequence of simple reflections:
\[
  \mu=\mu_0\to s_{\gamma_1}(\mu_0)=\mu_1\to s_{\gamma_2}(\mu_1)=\mu_2\to \ldots \to 
  s_{\gamma_m}(\mu_{m-1})=\mu_m=\mu' ,
\]
where $\gamma_i\in\Pi$ and $(\gamma_i,\mu_{i-1})>0$. The number of steps $m$ equals
$(\rho,\mu^\vee-{\mu'}^\vee)$. If $I\in\Ab_{\mu}$ is arbitrary and $w_I$ is the corresponding 
minuscule element, then the repeated application of Proposition~\ref{prop:elem-ext} shows that
$w':=s_{\gamma_1}\ldots s_{\gamma_m}w_I$ is again minuscule and $I'=I_{w'}$ is a required ideal.

(ii) \  Let $w_j\in \HW$ be the minuscule element corresponding to $I_j$. 
Then $w_j=s_{i_j}w_{j-1}$ for a sequence $(\ap_{i_1},\dots,\ap_{i_m})$ of affine simple roots.
The corresponding sequence of rootlets is
\[
  \mu=\mu_0 \to s_{i_1}\mu_0=\mu_1\to   s_{i_2}\mu_1=\mu_2 \to \dots \to \mu_m=\mu' . 
\]
If $i_j=0$, i.e., the $j$-th step is the reflection with the respect to $\ap_0=\delta-\theta$, then 
$\mu_{j-1}=\mu_j$,  see \cite[Prop.\,3.2]{imrn}.
For the steps corresponding to $\ap_{i_j}\in\Pi$, the value of $(\rho,\mu_j^\vee)$ is reduced
by at most $1$. Consequently, the sequence $(\ap_{i_1},\dots,\ap_{i_m})$ does not contain
$\ap_0$ and the value of $(\rho,\mu_j^\vee)$ decrease by $1$ at each step, i.e., all these
rootlets are different.
\end{proof}

\begin{cl}                  \label{cor:1}
If $\mu,\mu'$ are long roots such that $\mu'\curle \mu$, then
\begin{itemize}
\item[{\sf (i)}] \  $I(\mu)_{\max}\subset I(\mu')_{\max}$;
\item[{\sf (ii)}]  \ $\#\Ab_{\mu'} \ge \#\Ab_\mu$.
\end{itemize}
\end{cl}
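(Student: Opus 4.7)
For part \textsf{(i)}, the case $\mu=\mu'$ is immediate, so assume $\mu'\prec\mu$. My plan is to apply Proposition~\ref{prop:long-ext}(i) directly to the maximal ideal $I(\mu)_{\max}\in\Ab_\mu$: the proposition yields some $I'\in\Ab_{\mu'}$ containing $I(\mu)_{\max}$. Since $I(\mu')_{\max}$ is the unique maximum of $\Ab_{\mu'}$, necessarily $I'\subset I(\mu')_{\max}$, and the chain $I(\mu)_{\max}\subset I'\subset I(\mu')_{\max}$ concludes the argument.

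For part \textsf{(ii)}, the plan is to exhibit an injective map $\Ab_\mu\hookrightarrow \Ab_{\mu'}$ obtained by iterating the elementary extension of Proposition~\ref{prop:elem-ext} along a \emph{fixed} chain. Choose once and for all a sequence of long roots
\[
   \mu=\mu_0\succ\mu_1\succ\dots\succ\mu_m=\mu'
\]
with $\mu_j=s_{\gamma_j}(\mu_{j-1})$, $\gamma_j\in\Pi$, and $(\gamma_j,\mu_{j-1})>0$; such a chain was already produced in the proof of Proposition~\ref{prop:long-ext}(i). For each $I\in\Ab_\mu$ with minuscule element $w_I$, set
\[
   w' \;=\; s_{\gamma_m}s_{\gamma_{m-1}}\cdots s_{\gamma_1}\, w_I,
\]
and let $I'$ be the corresponding abelian ideal. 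Iterated application of Proposition~\ref{prop:elem-ext} shows that every partial product $s_{\gamma_j}\cdots s_{\gamma_1}w_I$ is minuscule with rootlet $\mu_j$; in particular, $I'\in\Ab_{\mu'}$. Injectivity of $I\mapsto I'$ is then automatic: left multiplication by the fixed element $s_{\gamma_m}\cdots s_{\gamma_1}$ is a bijection on $\HW$, and $w\mapsto I_w$ is injective on minuscule elements.

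The main obstacle, modest though it is, is verifying that the hypothesis $\mu_{j-1}\notin\Pi$ of Proposition~\ref{prop:elem-ext} is in force at every stage $j=1,\dots,m$. I would dispatch this with a one-line remark: if $\mu_{j-1}\in\Pi$, then $(\gamma_j,\mu_{j-1})>0$ with $\gamma_j\in\Pi$ forces $\gamma_j=\mu_{j-1}$, whence $\mu_j=-\mu_{j-1}\notin\Delta^+$, contradicting the construction of the chain. Once this is checked, the injection $\Ab_\mu\hookrightarrow\Ab_{\mu'}$ is in hand and delivers $\#\Ab_{\mu'}\ge\#\Ab_\mu$.
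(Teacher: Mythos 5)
Your proof is correct and follows essentially the same route as the paper: part (i) is Proposition~\ref{prop:long-ext}(i) applied to $I=I(\mu)_{\max}$ together with the maximality of $I(\mu')_{\max}$ in $\Ab_{\mu'}$, and part (ii) is the iterated application of Proposition~\ref{prop:elem-ext} along a chain of simple reflections, which the paper merely phrases as an induction on $m=(\rho,\mu^\vee-{\mu'}^\vee)$ with the base case $m=1$ handled by Proposition~\ref{prop:elem-ext}. Your explicit verification that the rootlet remains non-simple at every intermediate stage is a detail the paper leaves implicit, and your injectivity argument (left multiplication by a fixed element of $\HW$ plus the bijectivity of $w\mapsto I_w$ on minuscule elements) is the intended one.
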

\begin{proof}
(i) \ This readily follows from Proposition~\ref{prop:long-ext}(i) applied to $I=I(\mu)_{\max}$.
\\
(ii) \ Argue by induction on $m=(\rho,\mu^\vee-{\mu'}^\vee)$. For $m=1$, the assertion 
follows from Proposition~\ref{prop:elem-ext}.
\end{proof}

\begin{thm}           \label{thm:sup-max}
For any set $\{\mu_1,\dots,\mu_s\}\subset \Delta^+_l$ and $\mu=\vee_{i=1}^s\mu_i$,  we have 
\begin{itemize}
\item[{\sf (i)}] \  $\bigcap_{i=1}^s I(\mu_i)_{\max}=I(\mu)_{\max}$,
\item[{\sf (ii)}] \ If $I_i\in \Ab_{\mu_i}$ for $i=1,\dots,s$, then $\bigcap_{i=1}^s I_i\in \Ab_\mu$. 
\item[{\sf (iii)}] \ For every $I\in \Ab_\mu$, there exist $I_i\in\Ab_{\mu_i}$ such that 
$I=\bigcap_{i=1}^s I_i$. 
\end{itemize}
\end{thm}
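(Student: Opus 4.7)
The plan is to first establish a preliminary monotonicity principle for rootlets, which I state as a lemma: \emph{for any inclusion $I\subset I'$ of nonzero abelian ideals one has $\rt(I')\curle\rt(I)$, and in an elementary extension $I_w\subset I_{s_jw}$ ($0\le j\le n$) the rootlet is preserved if and only if the newly added root $\gamma$ lies outside $\gH$.} To prove it, I would reduce to a single step, write $w^{-1}(\ap_j)=\delta-\gamma$ so that $\rt(I_{s_jw})=s_j\rt(I_w)$, and use the isometry property of $\HW$ together with $w(\delta)=\delta$ and $\rt(I_w)=2\delta-w(\theta)$ to derive the identity $(\rt(I_w),\ap_j^\vee)=(\theta,\gamma^\vee)$; this is nonnegative since $\theta$ is the highest root, and vanishes precisely when $\gamma\notin\gH$. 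The case $j=0$ is handled separately, but the condition $s_0\rt(I_w)\in V$ itself forces both rootlet preservation and $\gamma\notin\gH$.

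With this lemma in hand, put $J=\bigcap_i I(\mu_i)_{\max}$. Corollary~\ref{cor:1}(i) gives $I(\mu)_{\max}\subset J$, and Theorem~\ref{thm:sup-min} gives $I(\mu)_{\min}=\bigcap_iI(\mu_i)_{\min}\subset J$. Since $J$ is an abelian ideal sandwiched between $I(\mu)_{\min}$ and each $I(\mu_i)_{\max}$, the lemma forces $\mu_i\curle\rt(J)\curle\mu$ for every $i$; the defining property of $\mu=\vee_i\mu_i$ then gives $\rt(J)=\mu$, so $J\in\Ab_\mu$ and hence $J\subset I(\mu)_{\max}$, proving~(i). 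Part~(ii) follows by the identical sandwich argument applied to $I(\mu)_{\min}\subset\bigcap_iI_i\subset\bigcap_iI(\mu_i)_{\max}=I(\mu)_{\max}$.

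For~(iii), given $I\in\Ab_\mu$, I apply Proposition~\ref{prop:long-ext}(i) to build, for each $i$, an extension $I_i\supset I$ in $\Ab_{\mu_i}$ by a shortest chain of elementary steps; Proposition~\ref{prop:elem-ext} ensures that every such step adjoins a root of $\gH$, so $I_i\setminus I\subset\gH$. By~(ii), $\bigcap_iI_i\in\Ab_\mu$, and a second application of the monotonicity lemma to any chain from $I(\mu)_{\min}$ up to $\bigcap_iI_i$ inside $\Ab_\mu$ shows that every step there preserves the rootlet and therefore adjoins a root \emph{outside} $\gH$; hence $\bigcap_iI_i\cap\gH=I(\mu)_{\min}$. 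Combining, $\bigcap_iI_i\setminus I\subset\bigl(\bigcap_iI_i\bigr)\cap\gH=I(\mu)_{\min}\subset I$, and since this set is disjoint from $I$ it must be empty, so $\bigcap_iI_i=I$. The main obstacle is the refined form of the monotonicity lemma: the identity $(\rt(I_w),\ap_j^\vee)=(\theta,\gamma^\vee)$ and the separate treatment of the affine root $\ap_0$ demand careful affine Weyl group bookkeeping, but once these are in place the rest reduces to essentially set-theoretic combinatorics.
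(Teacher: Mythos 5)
Your proof is correct, and for parts (i) and (ii) it is essentially the paper's argument: both sandwich $\bigcap_i I(\mu_i)_{\max}$ (resp.\ $\bigcap_i I_i$) between $I(\mu)_{\min}=\bigcap_i I(\mu_i)_{\min}$ and $I(\mu)_{\max}\subset\bigcap_i I(\mu_i)_{\max}$ (Theorem~\ref{thm:sup-min} and Corollary~\ref{cor:1}(i)) and squeeze the rootlet using its antitone behaviour under inclusion. For part (iii) you construct the ideals $I_i\supset I$ exactly as the paper does, via Proposition~\ref{prop:long-ext}(i), but you close the argument by a genuinely different route. The paper invokes Proposition~\ref{prop:long-ext}(ii): along any saturated chain of the prescribed length from $I$ up to $I_i$ the rootlets are pairwise distinct, so $\rt\bigl(\bigcap_i I_i\bigr)=\rt(I)$ already forces $\bigcap_i I_i=I$. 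You instead prove the sharper local statement that for a cover $I_w\subset I_{s_jw}$ adjoining the root $\gamma$ one has $(\rt(I_w),\ap_j^\vee)=(\theta,\gamma^\vee)\ge 0$, so the rootlet is preserved exactly when $\gamma\notin\gH$; I have checked this identity (it follows from $\rt(I_w)=w(2\delta-\theta)$, $w^{-1}(\ap_j)=\delta-\gamma$ and the $\HW$-invariance of the form, with the $\ap_0$-case forced by the rootlet staying in $V$). Combining ``$I_i\setminus I\subset\gH$'' (from Proposition~\ref{prop:elem-ext}) with ``$\bigl(\bigcap_i I_i\bigr)\cap\gH=I(\mu)_{\min}\subset I$'' (from rootlet-preserving chains adding only roots outside $\gH$) then kills $\bigcap_i I_i\setminus I$. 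What this buys you is that your lemma is in effect Proposition~\ref{prop:other-in-fibre} ($I\in\Ab_\mu$ iff $I\cap\gH=I(\mu)_{\min}$), which the paper establishes only later and by a less uniform argument; the price is some affine bookkeeping that the paper avoids. The only steps you should make explicit are the standard facts that a cover of abelian ideals corresponds to left multiplication $w\mapsto s_jw$ by an affine simple reflection, and that saturated chains between nested abelian ideals exist (adjoin maximal elements of the difference one at a time); both are routine and are used implicitly in the paper as well.
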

\begin{proof}
(i) \ Consider the abelian ideal $I= \bigcap_{i=1}^s I(\mu_i)_{\max}$. Since $I\subset I(\mu_i)_{\max}$, we have $\rt(I)\curge \mu_i$ for all $i$, hence $\rt(I)\curge \vee_{i=1}^s\mu_i=\mu$.
We also have $I\supset \bigcap_{i=1}^s I(\mu_i)_{\min}=I(\mu)_{\min}$, hence
$\rt(I)\curle \mu$ by \cite[Cor.\,3.3]{imrn}. 
It follows that $\rt(I)=\mu$ and $I\subset I(\mu)_{\max}$.

Since $\mu\curge\mu_i$,  by Corollary~\ref{cor:1}(i), we have
$I(\mu)_{\max}\subset I(\mu_i)_{\max}$ for all $i$, and $I(\mu)_{\max}\subset I$.

Thus, $I=I(\mu)_{\max}$.

(ii) \ It follows from Theorem~\ref{thm:sup-min}(ii) and  part (i) that
$I(\mu)_{\min} \subset \bigcap_{i=1}^s I_i \subset I(\mu)_{\max}$.
By \cite[Thm.\,3.1(iii)]{imrn}, the intermediate ideal $\bigcap_{i=1}^s I_i $ also belongs to $\Ab_\mu$.

(iii) \ 
Given $I\in \Ab_\mu$, we  construct the ideals $I_i\in \Ab_{\mu_i}$,
$i=1,\dots,s$, as prescribed in Proposition~\ref{prop:long-ext}(i).
Then $I\subset \bigcap_{i=1}^s I_i=:J$ and $\rt(J)=\vee_{i=1}^s \mu_i=\mu$.
That is, $\rt (I)=\rt(J)$. By Proposition~\ref{prop:long-ext}(ii), this is only possible if
$J=I$.
\end{proof}

Combining Theorems~\ref{thm:sup-min} and \ref{thm:sup-max} yields 
Theorem~\ref{thm:intr2} in the Introduction.
 
For any $\gamma\in\Delta^+$, set $I\langle{\curge}\gamma\rangle=\{\nu\in\Delta^+\mid
\nu\curge\gamma\}$. We also say that  $I\langle{\curge}\gamma\rangle$ 
is the {\it principal\/} upper ideal of $\Delta^+$ {\it generated\/} by $\gamma$.
It is not necessarily abelian.

\begin{ex}  \label{ex:all-maximal}
Let $\ap_1,\dots,\ap_s$ be the set of all long simple roots. 
Then $\vee_{i=1}^s\ap_i=\sum_{i=1}^s\ap_i=\vert\Pi_l\vert$ and 
$\{ I(\ap_i)_{\max}\mid i=1,\dots,s\}$ is the set of all  maximal abelian ideals in $\Ab$.
Hence $\bigcap_{i=1}^s I(\ap_i)_{\max}$ is an ideal with rootlet
$\vert\Pi_l\vert$.
Inspecting the list of root systems, we notice that the ideal
$\bigcap_{i=1}^s I(\ap_i)_{\min}=I(\vert\Pi_l\vert)_{\min}$ has a nice uniform description.
For any $\gamma=\sum_{i=1}^n a_i\ap_i\in\Delta^+$, we set $[\gamma/2]=\sum_{i=1}^n [a_i/2]\ap_i$. Then 
$I(\vert\Pi_l\vert)_{\min}$ is the upper ideal of $\Delta^+$ generated by the root $\theta-[\theta/2]$. (It is true that $\theta-[\theta/2]$ is always a root in $\gH$.)

In the {\bf A-D-E} case, we have $\vert \Pi_l\vert=|\Pi |$ and hence $(\theta, |\Pi_l |)\ne 0$. 
In fact, $(\theta, |\Pi_l |)\ne 0$ for all simple Lie algebras except type $\GR{C}{n}$, $n\ge 2$.
The condition $(\theta, |\Pi_l |)\ne 0$ implies that $\# \Ab_{|\Pi_l |}=1$ \cite[Thm.\,5.1]{imrn}, i.e., 
$I(|\Pi_l |)_{\min}=I(|\Pi_l |)_{\max}$  if $\g$ is not of type $\GR{C}{n}$.
\end{ex}

\begin{rmk}          \label{rmk:comm-roots}
The interest in $[\theta/2]$ is also justified by the following observations. 
As in \cite{rodstv}, we say that $\gamma\in\Delta^+$ is {\it commutative}, if the 
$\be$-submodule of $\g$ generated by $\g_\gamma$ is an abelian ideal; equivalently, if 
the upper ideal $I\langle{\curge}\gamma\rangle$
is abelian. Let $\Delta^+_{\mathsf{com}}$ denote the set of all commutative roots.
Clearly, $\Delta^+_{\mathsf{com}}=\bigcup_{\ap_i\in\Pi_l}I(\ap_i)_{\max}$.
It was noticed in \cite[Thm.\,4.4]{rodstv} that $\Delta^+\setminus \Delta^+_{\mathsf{com}}$ has a 
unique maximal element, and this maximal element is $[\theta/2]$.

For any $\gamma\in\Delta^+$, it appears to be  true that $[\gamma/2]\in \Delta^+\cup\{0\}$ and 
$\gamma-[\gamma/2]\in\Delta^+$. It would be interesting  to have a conceptual explanation 
for this. 
\end{rmk}

\section{Some properties of posets $\Ab_\mu$} 
\label{sect:sovpad}

Let $I\subset \Delta^+$ be an abelian ideal and $w_I=v{\cdot}t_r\in\HW$ the 
corresponding minuscule element. Recall that $v\in W$ and $r\in Q^\vee$.
We have associated two objects to these data:
the rootlet $\rt(I)=w_I(2\delta-\theta)\in \Delta^+_l\subset Q$ and the  element
$z_I:=v(r)\in Q^\vee$.

\begin{thm}
For an abelian ideal $I$, the following conditions are equivalent:
\begin{itemize}
\item[\sf (i)]  \ $\rt(I)^\vee=z_I$;
\item[\sf (ii)] \ $I=I(\mu)_{\min}$ \ for $\mu=\rt(I)$.
\end{itemize}
\end{thm}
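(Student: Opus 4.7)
My plan is to write $w_I=v\cdot t_r$ (with $v\in W$ and $r\in Q^\vee$) as in~\eqref{eq:decomp-w}, reformulate both (i) and (ii) in terms of $(v,r)$, and show that each is equivalent to the single condition $r=-\theta^\vee$. The first step is essentially a calculation. Using $t_r(x)=x-(x,r)\delta$ for $x\in V\oplus\BR\delta$ and $v(\delta)=\delta$, one finds
\[
\rt(I)\;=\;w_I(2\delta-\theta)\;=\;\bigl(2+(\theta,r)\bigr)\delta-v(\theta).
\]
Since $\rt(I)\in\Delta^+_l\subset V$ has no $\delta$-component, $(\theta,r)=-2$ and $\rt(I)=-v(\theta)$. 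Hence $\rt(I)^\vee=-v(\theta^\vee)$, and combined with $z_I=v(r)$, condition~(i) becomes $-v(\theta^\vee)=v(r)$, i.e.\ $r=-\theta^\vee$. The theorem therefore reduces to: $r=-\theta^\vee$ if and only if $I=I(\mu)_{\min}$ with $\mu=\rt(I)$.

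For (ii)$\Rightarrow$(i) I would induct on $m=(\rho,\theta^\vee-\mu^\vee)$. The base case $m=0$ is $I=\{\theta\}$ with $w_I=s_0=s_\theta\cdot t_{-\theta^\vee}$, so $r=-\theta^\vee$. For the inductive step, Propositions~\ref{prop:elem-ext} and~\ref{prop:long-ext} realise $I(\mu)_{\min}$ as the endpoint of a chain of $m$ one-step extensions starting from $\{\theta\}$; the cardinality formula $\#I(\mu)_{\min}=(\rho,\theta^\vee-\mu^\vee)+1$ forces each intermediate ideal to be minimal in its fibre. Each extension is left-multiplication by a simple reflection $s_\gamma$ with $\gamma\in\Pi$, and since $s_\gamma\cdot(v\cdot t_r)=(s_\gamma v)\cdot t_r$, the translation part $r$ is untouched and remains $-\theta^\vee$ throughout the chain.

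For (i)$\Rightarrow$(ii), assume $r=-\theta^\vee$. The analogous computation yields
\[
w_I(\delta-\gamma)\;=\;\bigl(1-(\gamma,\theta^\vee)\bigr)\delta-v(\gamma)
\]
for every $\gamma\in\Delta$. For $\gamma\in I_{w_I}$ this lies in $-\HD^+$, which forces either a negative $\delta$-coefficient---so $(\gamma,\theta^\vee)\ge 2$, whence $\gamma=\theta$ since $\theta$ is the unique root pairing maximally with $\theta^\vee$---or a vanishing $\delta$-coefficient together with $v(\gamma)\in\Delta^+$, i.e.\ $(\gamma,\theta^\vee)=1$. In either case $(\gamma,\theta)>0$ and hence $\gamma\in\gH$. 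Thus $I\subset\gH$, and the characterisation in the bullet list of the Introduction (that $I\subset\gH$ is equivalent to $I=I(\mu)_{\min}$ for some $\mu\in\Delta^+_l$) completes the proof. The crux is the reformulation in terms of $(v,r)$; after that the two directions are short, and the only delicate point---that the single-step extensions in (ii)$\Rightarrow$(i) actually land on the minimal ideal of each successive fibre---is handled automatically by the cardinality match.
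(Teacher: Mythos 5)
Your proposal is correct. The first half coincides in substance with the paper: the computation $w_I(2\delta-\theta)=(2+(\theta,r))\delta-v(\theta)$ and the resulting reformulation of (i) as $r=-\theta^\vee$ appear verbatim in the paper (as a remark placed just after the proof), and your chain argument for (ii)$\Rightarrow$(i) merely re-derives, via Propositions~\ref{prop:elem-ext} and~\ref{prop:long-ext}, the fact that $w_{I(\mu)_{\min}}=v_\mu s_0$ with $v_\mu\in W$, which the paper instead imports directly from \cite[Thm.\,4.3]{imrn} before reading off $z_I=v_\mu s_\theta(-\theta^\vee)=\mu^\vee$. Where you genuinely diverge is the converse: the paper disposes of (i)$\Rightarrow$(ii) in one line by invoking the \emph{injectivity} of Kostant's map $I\mapsto z_I$ (if $I\neq I(\mu)_{\min}$ then $z_I\neq z_{I(\mu)_{\min}}=\mu^\vee=\rt(I)^\vee$), whereas you compute $w_I(\delta-\gamma)=(1-(\gamma,\theta^\vee))\delta-v(\gamma)$ and deduce that $r=-\theta^\vee$ forces every $\gamma\in I$ to satisfy $(\gamma,\theta)>0$, i.e.\ $I\subset\gH$, then apply the characterisation of fibre-minimal ideals as those contained in $\gH$. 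Both converses lean on \cite[Thm.\,4.3]{imrn} in some form, but yours trades the appeal to the bijection $\Ab\leftrightarrow\eus Z_1$ for a short explicit computation and, as a bonus, makes visible \emph{why} $r=-\theta^\vee$ pins the ideal inside $\gH$; the paper's version is shorter. One cosmetic remark: in your chain argument the claim that each intermediate ideal is minimal in its fibre is true (same cardinality count at every step) but not needed --- only the endpoint matters.
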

\begin{proof}
1) \ Suppose that $I=I(\mu)_{\min}$. By \cite[Thm.\,4.3]{imrn}, 
$w_I=v_\mu s_0$, where $v_\mu \in W$ is the unique element of minimal length such
that  $v_\mu(\theta)=\mu$. Here $\ell(v_\mu)=(\rho,\theta^\vee-\nu^\vee)$. It is easily seen that for $w=s_0$ 
decomposition~\eqref{eq:decomp-w} is  $s_0=s_\theta{\cdot}t_{-\theta^\vee}$, where 
$s_\theta\in W$ is the reflection with respect to $\theta$. Hence 
the linear part of $w_I$ is $v_\mu s_\theta$ and
$r=-\theta^\vee$.
Therefore, $v_\mu s_\theta(-\theta^\vee)=v_\mu(\theta^\vee)=\mu^\vee$, as required.

2) \ Conversely, if $\rt(I)=\mu$ and $I\ne I(\mu)_{\min}$, then $z_I\ne z_{I(\mu)_{\min}}$.
By the first part, we have 
$z_{I(\mu)_{\min}}=\mu^\vee $. Thus, $z_I\ne \rt(I)^\vee$.
\end{proof}

Applying formulae~\eqref{eq:decomp-w} and \eqref{eq:general-aff-lin} to arbitrary minuscule $w_I$, 
we obtain
\[
    w_I(2\delta-\theta)=v{\cdot}t_r(2\delta-\theta)= v(2\delta+(\theta,r)\delta-\theta)=
    -v(\theta)+(2+(\theta,r))\delta .
\]
As we know that $\rt(I)\in \Delta^+$, one must have $(\theta,r)=-2$ and 
$-v(\theta)\in\Delta^+$. Therefore, the equality $\rt(I)^\vee=z_I$ is equivalent to that
$v(r)=-v(\theta^\vee)$, i.e., $r=-\theta^\vee$.

This can be summarised as follows:

\noindent
{\it If $w_I=v{\cdot}t_r\in \HW$ is minuscule, then $(\theta,r)=-2$. Moreover,
$\rt(I)^\vee=z_I$ if and only if $r=-\theta^\vee$, i.e., $r$ is the shortest element in the affine hyperplane $\{x\in V\mid (x,\theta)=-2\}$.}

The theory developed in \cite[Sect.\,4]{imrn} yields, in principle, a very good understanding of 
$\mu$-minimal ideals. In particular, an abelian ideal $I$ is minimal in some $\Ab_\mu$ if and only if
$I\subset \gH=\{\nu\in\Delta^+ \mid (\nu,\theta)\ne 0\}$ \cite[Thm.\,4.3]{imrn}.
The other ideals in $\Ab_\mu$ can be characterised as follows.

\begin{prop}    \label{prop:other-in-fibre}
For $\mu\in\Delta^+_l$ and $I\in\Ab$, we have
 $I\in\Ab_\mu$ if and only if $I\cap\gH=I(\mu)_{\min}$.
\end{prop}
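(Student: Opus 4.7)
The plan is to derive the statement from Theorem~\ref{thm:intr2} together with the bullet-list characterisation of $\mu$-minimal ideals recalled in the Introduction. The first step I would carry out is to show that $I\cap\gH$ is itself a nonempty abelian ideal contained in $\gH$, hence equal to $I(\mu')_{\min}$ for some unique $\mu'\in\Delta^+_l$. Abelianness is inherited from $I$, and upper-closedness in $\Delta^+$ uses dominance of $\theta$: if $\nu\in I\cap\gH$ and $\gamma\succeq\nu$, then $\gamma-\nu$ is a non-negative combination of simple roots and every $(\alpha_i,\theta)\ge 0$, so $(\gamma,\theta)\ge(\nu,\theta)>0$. Nonemptiness is automatic as soon as $I$ is nonzero, since then $\theta\in I\cap\gH$.

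For the forward direction, assume $I\in\Ab_\mu$. The inclusion $I(\mu)_{\min}\subset I\cap\gH=I(\mu')_{\min}$, combined with the bullet that $I(\mu)_{\min}\subset I(\mu')_{\min}$ iff $\mu'\curle\mu$, yields $\mu'\curle\mu$. For the reverse inequality I would apply Theorem~\ref{thm:intr2}(i) to $I\in\Ab_\mu$ and $I\cap\gH\in\Ab_{\mu'}$: their intersection $I\cap(I\cap\gH)=I\cap\gH$ itself must lie in $\Ab_{\mu\vee\mu'}$, forcing $\mu'=\mu\vee\mu'\curge\mu$. The two inequalities together give $\mu=\mu'$, hence $I\cap\gH=I(\mu)_{\min}$. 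The converse then falls out from the forward direction: if $I\cap\gH=I(\mu)_{\min}$, then $I$ is nonzero (the right-hand side contains $\theta$), so $I\in\Ab_{\rt(I)}$; applying the forward direction to $I$ gives $I\cap\gH=I(\rt(I))_{\min}$, and injectivity of $\mu\mapsto I(\mu)_{\min}$ (immediate from the fibre decomposition~\eqref{eq:parti}) gives $\rt(I)=\mu$.

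I do not foresee a serious obstacle; the only step with real content is the application of Theorem~\ref{thm:intr2}(i) to upgrade $\mu'\curle\mu$ to $\mu'\curge\mu$. Without that theorem, one would be forced to argue via chains of minuscule elements and track how rootlets behave under left multiplication by simple affine reflections, which would be considerably more involved.
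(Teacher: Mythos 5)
Your proof is correct, and its overall skeleton matches the paper's: identify $I\cap\gH$ with some $I(\mu')_{\min}$, squeeze $\mu'$ between two inequalities to force $\mu'=\mu$, and deduce the converse from the forward implication together with injectivity of $\mu\mapsto I(\mu)_{\min}$. The one genuine divergence is how you obtain the inequality $\mu'\curge\mu$. The paper gets both inequalities in one stroke from the anti-monotonicity of the rootlet map under inclusion (\cite[Cor.~3.3]{imrn}: $J\subset J'$ implies $\rt(J)\curge\rt(J')$), applied to the chain $I(\mu)_{\min}\subset I(\mu')_{\min}\subset I$, whose second inclusion directly yields $\mu'\curge\rt(I)=\mu$. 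You instead feed the pair $I\in\Ab_\mu$ and $I\cap\gH\in\Ab_{\mu'}$ into Theorem~\ref{thm:intr2}(i) and read off $\mu'=\mu\vee\mu'\curge\mu$ from disjointness of the fibres. This is valid and non-circular, since Theorem~\ref{thm:intr2} is established in Section~\ref{sect:intersect} before the proposition appears; it is heavier machinery for a fact that the monotonicity statement delivers in one line, but it has the merit of relying only on results explicitly quoted in the Introduction rather than on an external citation. A small point in your favour: you actually verify that $I\cap\gH$ is an upper (hence abelian) ideal contained in $\gH$, via dominance of $\theta$, a step the paper leaves implicit when it asserts $I\cap\gH=I(\mu')_{\min}$.
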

\begin{proof}
($\Rightarrow$) Since $I(\mu)_{\min}\subset \gH$, we have $I(\mu)_{\min}\subset I\cap\gH$.
Moreover, $I\cap\gH=I(\mu')_{\min}$ for some $\mu'\in\Delta^+_l$.
Then  $I(\mu)_{\min}\subset I(\mu')_{\min}\subset I$. By \cite[Cor.\,3.3]{imrn}, this yields
opposite inequalities for the rootlets, i.e.,
$\mu\curge\mu'\curge \mu$.

($\Leftarrow$) If $\mu'=\rt(I)$, then $I\cap\gH=I(\mu')_{\min}$ according to the previous part.
Hence $\mu'=\mu$.
\end{proof}

This implies that all ideals in $\Ab_\mu$ can be obtained from $I(\mu)_{\min}$ by adding 
suitable roots outside $\gH$. In particular, 
$I(\mu)_{\max}$ is maximal among all abelian ideals having the prescribed intersection,
$I(\mu)_{\min}$, with $\gH$.

For future use, we provide a property of ideals $I(\ap)_{\min}$ with $\ap\in\Pi_l$. Recall that the
integer $1+(\rho,\theta^\vee)=h^*$ is called the {\it dual Coxeter number\/} of $\Delta$.
By \cite[Prop.\,1]{rudi}, $\#\gH=2h^*-3$. If $\gamma\in\gH\setminus\{\theta\}$, then $\theta-\gamma
\in\gH\setminus\{\theta\}$ as well. Hence $\gH$ can be presented as the disjoint union of
$\theta$ and $h^*-2$ pairs of the form $\{\gamma_i,\theta-\gamma_i\}$.
By \cite[Theorem\,3.1]{imrn}, we have $\# I(\ap)_{\min}=(\rho,\theta^\vee-\ap^\vee)+1=
(\rho,\theta^\vee)=h^*-1$. Because $I(\ap)_{\min} \subset \gH$, we see that $I(\ap)_{\min}$ must
contain $\theta$ and exactly one element from each pair $\{\gamma_i,\theta-\gamma_i\}$ (cf. \cite[Prop.\,7.2]{cp3}). In particular, 

\begin{lm}    \label{lm:one-prop}
For $\ap\in\Pi_l$ and $\gamma\in\gH\setminus\{\theta\}$, we have 
$\gamma\in I(\ap)_{\min}$ if and only if $\theta-\gamma\not\in I(\ap)_{\min}$.
\end{lm}

Our next goal is to compare the upper ideals $I\langle{\curge}\mu\rangle$ and
$I(\mu)_{\max}$  ($\mu\in\Delta^+_l$). This will be achieved in two steps.

\begin{prop}    \label{prop:mu_min-inclus}
 For any $\mu\in\Delta^+_l$, we have $I(\mu)_{\min}\subset I\langle{\curge}\mu\rangle$.
\end{prop}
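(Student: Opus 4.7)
The plan is to argue by induction on $k=(\rho,\theta^\vee-\mu^\vee)$, which equals $\#I(\mu)_{\min}-1$. The base case $k=0$ corresponds to $\mu=\theta$, where $I(\theta)_{\min}=\{\theta\}=I\langle\curge\theta\rangle$ and the inclusion is automatic.

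For the inductive step, I would pick a simple root $\gamma\in\Pi$ with $(\gamma,\mu)<0$ --- such a $\gamma$ exists because $\mu\ne\theta$ makes $\mu$ non-dominant --- and set $\mu':=s_\gamma(\mu)\in\Delta^+_l$. Then $\mu\prec\mu'$ in the root order and a direct computation gives $(\rho,\theta^\vee-(\mu')^\vee)=k-(\gamma,\mu^\vee)<k$. Since $\mu\prec\mu'$ prevents $\mu'$ from being simple, Proposition~\ref{prop:elem-ext} applies to the minuscule element $w_{\mu'}=v_{\mu'}s_0$ of $I(\mu')_{\min}$ together with $\gamma$, yielding a new minuscule element $s_\gamma w_{\mu'}$ with rootlet $\mu$ and ideal $I(\mu')_{\min}\cup\{\nu\}$ for a single new root $\nu\in\gH$. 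Its cardinality equals $\#I(\mu)_{\min}$, so by the uniqueness of the minimum of $\Ab_\mu$ this ideal coincides with $I(\mu)_{\min}$. The inductive hypothesis gives $I(\mu')_{\min}\subset I\langle\curge\mu'\rangle\subset I\langle\curge\mu\rangle$, so only the inclusion $\nu\in I\langle\curge\mu\rangle$, i.e.\ $\nu\curge\mu$, remains to check.

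For this I would unpack the proof of Proposition~\ref{prop:elem-ext}: the relation $w_{\mu'}(\delta-\nu)=\gamma$, combined with $w_{\mu'}=v_{\mu'}s_0$ and the identity $(\beta,\theta^\vee)=1$ for any $\beta\in\gH\setminus\{\theta\}$, forces $\nu=\theta-\beta$ with $\beta:=v_{\mu'}^{-1}(\gamma)\in\Delta^+\cap\gH$. Using $\theta-\mu=(\theta-\mu')+(\mu',\gamma^\vee)\gamma$, the required $\nu\curge\mu$ reduces to the root-order inequality $\beta\curle(\theta-\mu')+(\mu',\gamma^\vee)\gamma$.

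The hard part will be this final inequality. I would strengthen the inductive hypothesis to $\beta'\curle\theta-\mu$ for every $\beta'\in\eus N(v_\mu)$ --- this is equivalent to the proposition via the description $I(\mu)_{\min}=\{\theta\}\cup\{\theta-\beta':\beta'\in\eus N(v_\mu)\}$ --- and then track the new element $\beta=v_{\mu'}^{-1}(\gamma)$ through a reduced factorization of $v_{\mu'}^{-1}$, comparing the simple-root coefficients accumulated at each reflection against the corresponding coefficients of $\theta-\mu'$. The extra $(\mu',\gamma^\vee)\gamma$ on the right absorbs the contribution coming from $\gamma$ itself, and this coefficient-by-coefficient bookkeeping is the technical core of the argument.
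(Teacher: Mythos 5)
Your skeleton is exactly the paper's: the same induction on $k=(\rho,\theta^\vee-\mu^\vee)$, the same one-root-at-a-time descent via Proposition~\ref{prop:elem-ext}, and the same reduction to the single inequality $v_{\mu'}^{-1}(\gamma)\curle\theta-\mu$. (The paper packages the induction as the statement $(\ast)$ that $\theta+v_\mu^{-1}(\nu)\curge\mu$ for all $\nu\in\eus N(v_\mu^{-1})$, which under $\eus N(v_\mu^{-1})=-v_\mu(\eus N(v_\mu))$ is precisely your strengthened hypothesis $\beta'\curle\theta-\mu$ for $\beta'\in\eus N(v_\mu)$; its case (b) is your final inequality.) Two points. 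First, a sign slip: $(\rho,\theta^\vee-(\mu')^\vee)=k+(\gamma,\mu^\vee)$, not $k-(\gamma,\mu^\vee)$; since $(\gamma,\mu)<0$ this is still $<k$, so nothing breaks. Second, and more substantively, the ``coefficient-by-coefficient bookkeeping'' you defer is the only real content of the proof, and you have not supplied the mechanism that makes it work. The paper's closing argument is short but specific: take a reduced word $v_{\mu'}^{-1}=s_{\gamma_k}\cdots s_{\gamma_1}$; since every simple reflection changes the level $(\rho,(\cdot)^\vee)$ of a long root by at most $1$ and the chain $\mu'\to\theta$ must climb $k=(\rho,\theta^\vee-(\mu')^\vee)$ levels in $k$ steps, each step adds exactly $n_i\gamma_i$ (with $n_i=1$ for $\gamma_i$ long and $n_i=\|\mathrm{long}\|^2/\|\mathrm{short}\|^2$ for $\gamma_i$ short), whence $\theta-\mu'=\sum_i n_i\gamma_i$; on the other hand each reflection adds at most $n_i\gamma_i$ to any positive root, so $v_{\mu'}^{-1}(\gamma)\curle\gamma+\sum_i n_i\gamma_i=\gamma+(\theta-\mu')$; finally the leftover $\gamma$ is absorbed because $(\mu',\gamma^\vee)\ge 1$, giving $v_{\mu'}^{-1}(\gamma)\curle(\theta-\mu')+(\mu',\gamma^\vee)\gamma=\theta-\mu$. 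Without the ``exactly $n_i\gamma_i$ per step'' observation (which is where the hypothesis that $\mu,\mu'$ are long enters), the comparison you propose does not obviously close, so as written your argument has a gap precisely at the step you flag as the technical core.
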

\begin{proof}
As above, $v_\mu\in W$ is the element of minimal length such that $v_\mu(\theta)=\mu$
and $w=v_\mu s_0$ is the minuscule element for $I(\mu)_{\min}$. Then
$I(\mu)_{\min}=\{\gamma\in\Delta^+\mid -\gamma+\delta\in \eus N(w)\}$ and
$\eus N(w)=\{\ap_0\}\cup s_0(\eus N(v_\mu))$.
Therefore, if $\gamma\in I(\mu)_{\min}$,  then either $\gamma=\theta$, or 
$-\gamma+\delta\in s_0(\eus N(v_\mu))$, i.e., $\theta-\gamma\in \eus N(v_\mu)$.
Clearly, $\eus N(v_\mu^{-1})=- v_\mu(\eus N(v_\mu))$. 
Hence $\theta-\gamma\in \eus N(v_\mu)$ if and only if 
$-v_\mu(\theta-\gamma)=v_\mu(\gamma)-\mu\in \eus N(v_\mu^{-1})$.
Consequently, 
\[
  \gamma\in I(\mu)_{\min} \ \ \& \ \ \gamma\ne\theta  \Leftrightarrow  v_\mu(\gamma)-\mu\in
  \eus N(v_\mu^{-1}) 
\]
Set $\nu=v_\mu(\gamma)-\mu$. Then $\gamma=v_\mu^{-1}(\nu+\mu)=\theta+v_\mu^{-1}(\nu)$. Hence our goal is to prove that 
\\[.8ex]
\hbox to \textwidth{\ $(\ast)$ \hfil
{\it for any $\nu\in \eus N(v_\mu^{-1})$, one has
$\theta+v_\mu^{-1}(\nu)\curge \mu$.} \hfil }

We will argue by induction on $\ell(v_\mu)=(\rho, \theta^\vee-\mu^\vee)$. To perform the induction step, assume that $\mu\not\in\Pi_l$ and $(\ast)$ is satisfied. Take any $\ap\in\Pi$
such that $(\ap,\mu)>0$ and set $\mu':=s_\ap(\mu)\prec \mu$.
Consider $v_{\mu'}=s_\ap v_\mu$, which corresponds to the minuscule element
$w'=s_\ap w=v_{\mu'} s_0$ (Proposition~\ref{prop:elem-ext}) and the larger abelian ideal 
$I(\mu')_{\min}$. Then 
\[
    \eus N(v_{\mu'}^{-1})=\{\ap\}\cup s_\ap(\eus N(v_\mu^{-1})) .
\]
Thus, to prove the analogue of $(\ast)$ for $\nu'\in \eus N(v_{\mu'}^{-1})$, we have to handle two possibilities:

a) \ $\nu'=s_\ap(\nu)$ for $\nu\in \eus N(v_\mu^{-1})$. \\
Then $\theta+v_{\mu'}^{-1}(\nu')=\theta+v_\mu^{-1}(\nu)\curge \mu \succ \mu'$, as required.

b) \ $\nu'=\ap$. \\
We have to prove here that $\theta+v_{\mu'}^{-1}(\ap)=\theta-v_\mu^{-1}(\ap)\curge \mu'=s_\ap(\mu)$.
To this end, take a reduced decomposition 
$v_\mu^{-1}=s_{\gamma_k}{\cdots} s_{\gamma_1}$, where 
$\{ \gamma_1,\ldots,\gamma_k\}$ is a multiset of simple roots. 
Recall that $v_\mu^{-1}(\mu)=\theta$  and $k=(\rho,\theta^\vee-\mu^\vee)$. 
Since $(\rho, s_\ap(\nu)^\vee-\nu^\vee)\in \{-1,0,1\}$ for any $\nu\in\Delta^+_l$ and $\ap\in\Pi$, the chain of roots 
\[
\mu_0=\mu,\ \mu_1=s_{\gamma_1}(\mu),\ \mu_2=s_{\gamma_2}s_{\gamma_1}(\mu),\dots,\ \mu_k=\theta ,
\]
has the property that $\mu_i\prec \mu_{i+1}$ and 
each simple reflection $s_{\gamma_i}$ increases the "level" $(\rho, (\cdot)^\vee)$ by $1$. Then
we must have $\theta=\mu+\sum_{i=1}^k n_i\gamma_i$, where \\ 
$n_i=\begin{cases} 1 &  \text{if $\gamma_i$ is long} \\
                              |\! |{\rm long}|\! |^2 / |\! | {\rm short} |\! |^2 &  \text{if $\gamma_i$ is short}.
                              \end{cases}$
\\
We also have $s_\ap(\mu)=\mu-(\mu,\ap^\vee)\ap$ and 
$v_{\mu}^{-1}(\ap)\curle \ap +  \sum_{i=1}^k n_i\gamma_i$.  Whence
\[
  v_\mu^{-1}(\ap)+ s_\ap(\mu) \curle \mu + \sum_{i=1}^k n_i\gamma_i +(1-(\mu,\ap^\vee))\ap \curle \theta .
\]
This completes the induction step and proof of proposition.
\end{proof}
\begin{thm}    \label{thm:mu_max-inclus}
 For any $\mu\in\Delta^+_l$, we have $I(\mu)_{\max}\subset I\langle{\curge}\mu\rangle$.
 In particular, if $I\in\Ab_\mu$, then $I\subset I\langle{\curge}\mu\rangle$.
\end{thm}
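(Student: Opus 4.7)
The plan is to extend Proposition~\ref{prop:mu_min-inclus} to the entire fiber $\Ab_\mu$ by induction on $|I|-|I(\mu)_{\min}|$ for $I\in\Ab_\mu$. The base case $I=I(\mu)_{\min}$ is the preceding proposition, and the second statement of the theorem then follows from the first since every $I\in\Ab_\mu$ is contained in $I(\mu)_{\max}$. For the inductive step, I use that $\Ab_\mu$ is a ranked sub-poset of $\Ab$ with minimum $I(\mu)_{\min}$, so any $I\in\Ab_\mu$ strictly containing $I(\mu)_{\min}$ admits a predecessor $I'\in\Ab_\mu$ with $|I'|=|I|-1$. Writing $I=I'\cup\{\gamma\}$, the inductive hypothesis $I'\subset I\langle{\curge}\mu\rangle$ reduces the task to showing $\gamma\curge\mu$.

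The cover $I'\subsetneq I$ in $\Ab_\mu$ corresponds to $w_I=s_i w_{I'}$ for an affine simple reflection $s_i\in\HW$ preserving the rootlet, which forces $(\ap_i,\mu)=0$ in the extended inner product on $\HV$. From $w_{I'}^{-1}(\ap_i)=\delta-\gamma$ and the decomposition $w_{I'}=v\cdot t_r$, using the identity $v(\theta)=-\mu$ (valid for every minuscule element of rootlet $\mu$, since $\mu=w_{I'}(2\delta-\theta)=-v(\theta)+(2+(\theta,r))\delta$ forces $v(\theta)=-\mu$ and $(\theta,r)=-2$), I obtain an explicit formula for $\gamma$. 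Factoring $v=v''s_\theta$ with $v''(\theta)=\mu$ (so $v''=v_\mu x$ for some $x\in W_\theta:=\mathrm{Stab}_W(\theta)$), the orthogonality $(\ap_i,\mu)=0$ allows $s_\theta$ to be dropped from the formula, yielding $\gamma=-(v'')^{-1}(\ap_i)$ if $\ap_i\in\Pi$, and $\gamma=(v'')^{-1}(\theta)$ if $\ap_i=\ap_0$ (the latter requiring also $(\theta,\mu)=0$).

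For $v''=v_\mu$ (i.e., $I'=I(\mu)_{\min}$), the inequality $\gamma\curge\mu$ follows from a direct translation of the core calculation $(\ast)$ in the proof of Proposition~\ref{prop:mu_min-inclus}. The main obstacle I expect is the general case $v''\ne v_\mu$: I would run a secondary induction on the $W_\theta$-length $\ell(v_\mu^{-1}v'')$, verifying that each step (corresponding to a cover relation in $\Ab_\mu$) preserves the inequality $\gamma\curge\mu$. The detailed verification tracks how simple reflections from a reduced expression of $v_\mu^{-1}v''$ interact with $\ap_i$ and $\theta$, parallel to the chain-of-simple-roots analysis in the proof of Proposition~\ref{prop:mu_min-inclus}.
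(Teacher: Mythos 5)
Your overall strategy---climbing the fibre $\Ab_\mu$ one root at a time from $I(\mu)_{\min}$ and checking that each added root majorizes $\mu$---is genuinely different from the paper's argument, and its skeleton is sound: a minimal element of $I\setminus I(\mu)_{\min}$ is automatically minimal in $I$ (because $I(\mu)_{\min}$ is an upper ideal), so the predecessor $I'$ exists and lies in $\Ab_\mu$ by the intermediate-ideal property, and identifying the cover with left multiplication by an affine simple reflection fixing the rootlet is correct. The problem is that the one step carrying all the content, namely the inequality $\gamma\curge\mu$ for the added root $\gamma=-(v'')^{-1}(\ap_i)$ (resp. $(v'')^{-1}(\theta)$), is not proved: you explicitly defer it to a ``secondary induction'' on $\ell(v_\mu^{-1}v'')$ whose inductive step is described only as tracking how simple reflections interact with $\ap_i$ and $\theta$. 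Moreover, even your base case is not a ``direct translation'' of $(\ast)$: the calculation $(\ast)$ in Proposition~\ref{prop:mu_min-inclus} shows $\theta+v_\mu^{-1}(\nu)\curge\mu$ for $\nu\in\eus N(v_\mu^{-1})$, i.e. it bounds the roots that already lie \emph{in} $I(\mu)_{\min}$; the root you must control is $-v_\mu^{-1}(\ap_i)$ with $(\ap_i,\mu)=0$, and applying $(\ast)$ to $\nu=\ap_i$ yields $\theta-\gamma\curge\mu$, a bound in the wrong direction. So as written the proposal reduces the theorem to an unproved root-system statement that is essentially as hard as the theorem itself.

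For comparison, the paper avoids the whole induction by arguing root by root on $I(\mu)_{\max}$: every $\gamma\in I(\mu)_{\max}$ is a commutative root; if $\gamma$ is long, then
$I(\gamma)_{\min}\subset I\langle{\curge}\gamma\rangle\cap\gH\subset I(\mu)_{\max}\cap\gH=I(\mu)_{\min}$
by Propositions~\ref{prop:mu_min-inclus} and \ref{prop:other-in-fibre}, whence $\gamma\curge\mu$ by the comparison criterion for $\mu$-minimal ideals; if $\gamma$ is short and lies in $\gH$, it already belongs to $I(\mu)_{\min}$; and short commutative roots outside $\gH$ occur only in type $\GR{C}{n}$, where a one-line check finishes the proof. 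If you want to salvage your route, you must actually establish that $-(v'')^{-1}(\ap_i)\curge\mu$ whenever $s_iw_{I'}$ is minuscule with the same rootlet $\mu$; until that is done, the argument is incomplete.
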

\begin{proof}
Suppose that $\gamma\in I(\mu)_{\max}$. In particular, $\gamma$ is a commutative root. 
\\
\textbullet\quad If  $\gamma\in\Delta^+_l$, then the ideal $I(\gamma)_{\min}$ is well-defined and
\[
  I(\gamma)_{\min}\ \underset{Prop.~\ref{prop:mu_min-inclus}}{\subset}\  I\langle{\curge}\gamma\rangle\cap \gH\ \ {\subset} \ \ 
  I(\mu)_{\max}\cap \gH\underset{Prop.~\ref{prop:other-in-fibre}}{=}I(\mu)_{\min} .
\]
By \cite[Thm.\,4.5]{imrn}, we conclude that $\gamma\curge\mu$. (This completes the proof in the {\bf A-D-E} case!)
\\
\textbullet\quad If $\gamma$ is short and $\gamma\in \gH$, then $\gamma\in I(\mu)_{\min}\subset
 I\langle{\curge}\mu\rangle$ by Propositions~~\ref{prop:other-in-fibre} and 
 \ref{prop:mu_min-inclus}. 
\\
\textbullet\quad  The remaining possibility is that $\gamma$ is short and $\gamma\not\in \gH$. 
But, there is no such commutative roots for $\GR{B}{n},\GR{F}{4},\GR{G}{2}$. (For 
$\GR{B}{n}$, the only short commutative root is $\esi_1$ and $\theta=\esi_1+\esi_2$.)
For $\GR{C}{n}$, such commutative roots are of the form $\gamma=\esi_i+\esi_j$
with $2\le i<j\le n$. Here $\gH=\{\esi_1\pm\esi_j\mid 2\le j\le n\}\cup\{2\esi_1\}$ and $I\langle {\curge}\esi_i+\esi_j\rangle\cap\gH=I\langle {\curge}\esi_1+\esi_j\rangle$.
Then using Proposition~\ref{prop:other-in-fibre} shows that 
$\rt \bigl(I\langle {\curge}\esi_i+\esi_j\rangle\bigr)=2\esi_j$. Clearly, we have
$\esi_i+\esi_j\curge 2\esi_j$.  
(As usual, the simple roots of $\GR{C}{n}$ are $\esi_1-\esi_2,\dots,\esi_{n-1}-\esi_n,2\esi_n$.)
\end{proof}

\begin{rmk}
If $\g$ is of type $\GR{A}{n}$ or $\GR{C}{n}$, then 
$I(\mu)_{\max}= I\langle{\curge}\mu\rangle$ for all $\mu\in\Delta^+_l$. For all other
types, this is not always the case.
\end{rmk}

\section{Centralisers of abelian ideals}
\label{sect:central} 

\noindent In this section, we mostly regard abelian ideals as subspaces $\ah$ of $\ut$.
Accordingly, for $\mu\in\Delta^+_l$,  the minimal and maximal elements
of $\Ab_\mu$ are denoted by $\ah(\mu)_{\min}$ and $\ah(\mu)_{\max}$, respectively.

If $\ce\subset \g$ is a subspace, then $\z_\g(\ce)$ denotes the {\it centraliser\/} of
$\ce$ in $\g$. If $\ce$ is $\be$-stable, then so is $\z_\g(\ce)$.
If $\ah\in\Ab$, then $\z_\g(\ah)$ is a $\be$-stable subalgebra of $\g$ and 
$\z_\g(\ah)\supset \ah$. However, $\z_\g(\ah)$ may contain semisimple elements
and/or it may happen that $\z_\g(\ah)\not\subset \be$.

Consider the following properties of abelian ideals:

\noindent
(P1): \  $\z_\g(\ah)$ belongs to $\ut$; \
(P2): \ $\z_\g(\ah)$ a sum of abelian ideals; \ 
(P3): \  $\z_\g(\ah)$ an abelian ideal.

Clearly, (P3)$\Rightarrow$(P2)$\Rightarrow$(P1).

\noindent
We say that $\ah$ is of {\it full rank}, if $I_\ah$ contains $n$ linearly independent roots ($n=\rk\g$).

\begin{lm}  \label{lm:full-rang}
Let $\ah\in\Ab$. Then $\z_\g(\ah)\subset \ut$ if and only if $\ah$ is of full rank.
\end{lm}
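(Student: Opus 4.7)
The proof naturally splits along the $\te$-weight decomposition of the centraliser. The plan is to exploit $\be$-stability of $\z_\g(\ah)$ together with the elementary observation that raising a negative root space by its opposite positive root produces a nonzero element of $\te$.

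First I would observe that, since $\z_\g(\ah)$ is $\te$-stable (being $\be$-stable), we have the weight decomposition
\[
   \z_\g(\ah) = \bigl(\z_\g(\ah)\cap\te\bigr)\ \oplus\ \bigoplus_{\gamma\in\Delta}\bigl(\z_\g(\ah)\cap\g_\gamma\bigr).
\]
A direct computation shows that $h\in\te$ centralises $\ah=\bigoplus_{\gamma\in I_\ah}\g_\gamma$ iff $\gamma(h)=0$ for all $\gamma\in I_\ah$, hence
\[
   \z_\g(\ah)\cap\te=\bigcap_{\gamma\in I_\ah}\ker\gamma.
\]
This intersection is trivial precisely when the roots in $I_\ah$ span $\te^*$, i.e.\ when $\ah$ is of full rank.

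For the easy direction ($\Rightarrow$), if $\z_\g(\ah)\subset\ut$, then $\z_\g(\ah)\cap\te=0$, so by the formula just obtained $\ah$ is of full rank.

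For ($\Leftarrow$), assume $\ah$ is of full rank, so $\z_\g(\ah)\cap\te=0$. I need to rule out negative root contributions. Suppose for contradiction that $\g_{-\gamma}\subset\z_\g(\ah)$ for some $\gamma\in\Delta^+$. Since $\z_\g(\ah)$ is $\be$-stable and $\g_\gamma\subset\ut\subset\be$, we get
\[
   0\ne [\g_\gamma,\g_{-\gamma}]\subset [\be,\z_\g(\ah)]\subset \z_\g(\ah),
\]
and the left-hand side lies in $\te$. This forces $\z_\g(\ah)\cap\te\ne 0$, contradicting full rank. Therefore $\z_\g(\ah)$ has no negative weight components and, combined with $\z_\g(\ah)\cap\te=0$, lies entirely in $\ut$.

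There is no real obstacle here: the argument is essentially bookkeeping with the $\be$-module structure of the centraliser, and the key ingredient is simply that $[\g_\gamma,\g_{-\gamma}]$ is a nonzero element of $\te$ in any semisimple Lie algebra.
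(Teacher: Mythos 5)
Your argument is correct and follows the same route as the paper: identify $\z_\g(\ah)\cap\te$ with $\bigcap_{\gamma\in I_\ah}\ker\gamma$ (trivial iff $\ah$ has full rank), and then use $\be$-stability of the centraliser to exclude negative root spaces. You merely make explicit the step the paper leaves implicit, namely that a $\be$-stable subspace containing $\g_{-\gamma}$ must contain $[\g_\gamma,\g_{-\gamma}]\ne 0$ inside $\te$.
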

\begin{proof}
If $\ah$ is not of full rank, then $\z_\g(\ah)\cap\te\ne 0$. If $\ah$ is of full rank, then 
$\z_\g(\ah)\cap \te=0$ and $\z_\g(\ah)$ is $\be$-stable. Therefore, $\z_\g(\ah)$ cannot contain
root spaces corresponding to negative roots.
\end{proof}

\begin{lm}   \label{lm:sum-abelian}
$\z_\g(\ah)$ is a sum of abelian ideals if and only if\/ $\ah$ is of full rank and 
$\theta-[\theta/2]\in I_\ah$.
\end{lm}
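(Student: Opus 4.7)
The plan is to combine the remark made just before Lemma~\ref{lm:full-rang}, together with the description of non-commutative roots in Remark~\ref{rmk:comm-roots}, to reduce the whole statement to tracking whether $[\theta/2]$ lies in the upper ideal $I_{\z_\g(\ah)}$.

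First I would dispose of the full-rank hypothesis. Since (P2) implies (P1), Lemma~\ref{lm:full-rang} forces $\ah$ to be of full rank as soon as $\z_\g(\ah)$ is a sum of abelian ideals; and conversely, to prove the `if' direction I may from the start assume $\ah$ is of full rank, so that $\z_\g(\ah)\subset \ut$ is determined by the upper ideal $I_{\z_\g(\ah)}\subset\Delta^+$. As noted just before Lemma~\ref{lm:full-rang}, in this situation (P2) is equivalent to $I_{\z_\g(\ah)}\subset \Delta^+_{\mathsf{com}}$.

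Next I would apply Remark~\ref{rmk:comm-roots}. Because $\Delta^+_{\mathsf{com}}=\bigcup_{\ap\in\Pi_l}I(\ap)_{\max}$ is a union of upper ideals, it is itself an upper ideal, and its complement $\Delta^+\setminus \Delta^+_{\mathsf{com}}$ is therefore a lower ideal with unique maximum $[\theta/2]$; that is, the non-commutative roots are exactly $\{\gamma\in\Delta^+\mid \gamma\curle [\theta/2]\}$. Since $I_{\z_\g(\ah)}$ is upper, it meets the non-commutative roots if and only if it contains their maximum, hence
\[
I_{\z_\g(\ah)}\subset \Delta^+_{\mathsf{com}}\ \iff\ [\theta/2]\notin I_{\z_\g(\ah)}.
\]

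The final step reformulates the right-hand side inside $\ah$. By definition, $[\theta/2]\notin I_{\z_\g(\ah)}$ means $\g_{[\theta/2]}$ does \emph{not} centralise $\ah$, which happens precisely when there exists $\gamma\in I_\ah$ with $[\theta/2]+\gamma\in\Delta^+$. For any such $\gamma$, the maximality of $\theta$ gives $[\theta/2]+\gamma\curle \theta$, i.e.\ $\gamma\curle \theta-[\theta/2]$; using the fact (recalled in Example~\ref{ex:all-maximal} and Remark~\ref{rmk:comm-roots}) that $\theta-[\theta/2]$ is actually a positive root, the upper-ideal property of $I_\ah$ then forces $\theta-[\theta/2]\in I_\ah$. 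Conversely, if $\theta-[\theta/2]\in I_\ah$, taking $\gamma=\theta-[\theta/2]$ gives $[\theta/2]+\gamma=\theta\in\Delta^+$, so $\g_{[\theta/2]}$ does not centralise $\ah$. Chaining the three equivalences yields the lemma.

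The only non-routine ingredient is Remark~\ref{rmk:comm-roots}, which I invoke as a black box; everything else is a short manipulation of the upper-ideal condition and the extremality of $\theta$, so I do not anticipate a serious obstacle.
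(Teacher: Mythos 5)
Your argument is correct and follows essentially the same route as the paper's (very terse) proof: the equivalence $\g_{[\theta/2]}\subset\z_\g(\ah)\Leftrightarrow\theta-[\theta/2]\notin I_\ah$, combined with the fact that $[\theta/2]$ is the unique maximal noncommutative root, with the full-rank condition handled via (P2)$\Rightarrow$(P1) and Lemma~\ref{lm:full-rang}. You have merely spelled out the details that the paper leaves implicit.
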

\begin{proof} The root space $\g_{[\theta/2]}$ belongs to $\z_\g(\ah)$ if and only if 
$\theta-[\theta/2]\not\in I_\ah$. 
The rest follows from the fact that $[\theta/2]$ is the unique maximal  
noncommutative root.
\end{proof}

Recall that $\{\ah(\ap)_{\max}  \mid \ap\in \Pi_l\}$ is the complete set of 
maximal abelian ideals.
For any $\ah\in\Ab$, $\z_\g(\ah)$ contains the sum of all maximal abelian ideals that 
contain $\ah$. Therefore, if $\z_\g(\ah)$ is an abelian ideal, then 
$\z_\g(\ah)=\ah(\ap)_{\max}$ for some $\ap\in\Pi_l$ and $\ah(\ap)_{\max}$ is
the only maximal abelian ideal containing $\ah$. 

\begin{lm}                 \label{lm:3}
An abelian ideal $\ah$ belongs to a unique maximal abelian ideal if and only if
there is a unique $\ap\in\Pi_l$ such that $\rt(\ah)\curge \ap$. In particular, in the simply-laced case, the last condition means precisely that $\rt(\ah)\in \Pi_l$. 
\end{lm}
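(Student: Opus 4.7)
The plan is to reduce the lemma to the elementary containment criterion

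\emph{Claim:} For $\ap\in\Pi_l$ and $\ah\in\Ab$, we have $\ah\subset \ah(\ap)_{\max}$ if and only if $\rt(\ah)\curge \ap$.

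Once this claim is established, the first assertion of the lemma is immediate: since the set of maximal abelian ideals is exactly $\{\ah(\ap)_{\max}\mid \ap\in\Pi_l\}$ (from the discussion in the Introduction), the maximal abelian ideals containing $\ah$ are in bijection with $\{\ap\in\Pi_l\mid \rt(\ah)\curge \ap\}$, and uniqueness translates verbatim.

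To prove the claim, I would argue both directions using the monotonicity of the rootlet and Corollary~\ref{cor:1}(i). For the forward direction, suppose $\ah\subset \ah(\ap)_{\max}$; because an inclusion of abelian ideals reverses the order of their rootlets (\cite[Cor.\,3.3]{imrn}, repeatedly invoked in Section~\ref{sect:intersect}), this forces $\rt(\ah)\curge \rt(\ah(\ap)_{\max})=\ap$. For the converse, write $\mu=\rt(\ah)$ and assume $\mu\curge \ap$. Then $\ah\subset \ah(\mu)_{\max}$ trivially, since $\ah(\mu)_{\max}$ is the maximum of the poset $\Ab_\mu$ containing $\ah$. On the other hand, Corollary~\ref{cor:1}(i) applied to $\ap\curle \mu$ yields $\ah(\mu)_{\max}\subset \ah(\ap)_{\max}$, so $\ah\subset \ah(\ap)_{\max}$ as required.

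For the simply-laced addendum, one has $\Pi_l=\Pi$, so the condition becomes that there is a unique simple root $\ap_j$ with $\rt(\ah)\curge \ap_j$. Writing $\rt(\ah)=\sum_i a_i\ap_i$ with $a_i\in\BN$, the relation $\rt(\ah)\curge \ap_j$ holds precisely when $a_j\ge 1$, i.e.\ exactly when $\ap_j\in\supp(\rt(\ah))$. Uniqueness of such $\ap_j$ therefore says that $\supp(\rt(\ah))$ is a singleton; as $\rt(\ah)$ is a positive root and simple roots are indivisible, this is equivalent to $\rt(\ah)\in\Pi=\Pi_l$.

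The only genuinely delicate point is the forward direction of the claim, which rests on the antitone behaviour of $\rt(\cdot)$ under inclusion; everything else is bookkeeping with results already established in Section~\ref{sect:intersect} and in \cite{imrn}.
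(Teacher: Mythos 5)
Your proof is correct and follows essentially the same route as the paper, which simply cites the antitone behaviour of $\rt(\cdot)$ under inclusion; your write-up usefully makes explicit the converse implication $\rt(\ah)\curge\ap\Rightarrow\ah\subset\ah(\ap)_{\max}$ via Corollary~\ref{cor:1}(i), which the paper leaves implicit.
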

\begin{proof}
Follows from the fact that the inclusion $\ah\subset\tilde\ah$ implies that $\rt(\ah)\curge
\rt(\tilde\ah)$, see \cite[Cor.\,3.3]{imrn}. (Cf. also  \cite[Thm.\,2.6(3)]{imrn}.)
\end{proof}

Note that if $\ah$ is a maximal abelian ideal, then $\z_\g(\ah)=\ah$ and thereby 
$\z_\g(\ah)$ is an abelian ideal. For, if $\z_\g(\ah)\supsetneqq\ah$ and $\gamma$ is a 
maximal element in $I_{\z_\g(\ah)}\setminus I_\ah$, then $\ah\oplus \g_\gamma$ would be 
a larger abelian ideal!
To get a general answer, we need some preparatory results.
\begin{lm}        \label{lm:4}
For any $\ap\in\Pi_l$, the ideal $\ah(\ap)_{\min}$ is of full rank. 
\end{lm}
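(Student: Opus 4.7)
The plan is to exhibit $n=\rk\g$ linearly independent roots inside $I(\ap)_{\min}$ by unpacking the minuscule element $w=v_\ap s_0$ associated to it, where $v_\ap\in W$ is of minimal length with $v_\ap(\theta)=\ap$. By Proposition~\ref{prop:long-ext} (applied to the pair $\theta\succ\ap$), we may take a chain of long roots
\[
\theta=\mu_0\succ\mu_1\succ\cdots\succ\mu_m=\ap,\qquad \mu_i=s_{\gamma_i}(\mu_{i-1}),\ \gamma_i\in\Pi,
\]
with $\mu_{i-1}-\mu_i=c_i\gamma_i$ and $c_i\in\BN$, giving a reduced expression $v_\ap=s_{\gamma_m}\cdots s_{\gamma_1}$ of length $m=(\rho,\theta^\vee-\ap^\vee)$.

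Starting from the minuscule element $s_0$ (which corresponds to $I(\theta)_{\min}=\{\theta\}$) and iterating Proposition~\ref{prop:elem-ext} with the simple reflections $s_{\gamma_1},s_{\gamma_2},\ldots,s_{\gamma_m}$ in order, one builds $I(\ap)_{\min}$ one root at a time. Unwinding the identity $w_{i-1}^{-1}(\gamma_i)=\delta-\mu_2^{(i)}$ from the proof of Proposition~\ref{prop:elem-ext}, one finds that the root added at step $i$ is
\[
\mu_2^{(i)}=\theta-\beta_i,\qquad \beta_i:=s_{\gamma_1}\cdots s_{\gamma_{i-1}}(\gamma_i).
\]
(The $\beta_i$ are the elements of $\eus N(v_\ap^{-1})$ listed via the reduced expression $v_\ap^{-1}=s_{\gamma_1}\cdots s_{\gamma_m}$.) Consequently
\[
I(\ap)_{\min}=\{\theta\}\cup\{\theta-\beta_i\mid 1\le i\le m\},
\]
and its linear span equals $\mathrm{span}\bigl(\{\theta\}\cup\{\beta_1,\ldots,\beta_m\}\bigr)$.

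A routine induction on $j$ yields $\mathrm{span}(\beta_1,\ldots,\beta_j)=\mathrm{span}(\gamma_1,\ldots,\gamma_j)$, since each reflection $s_{\gamma_k}$ translates vectors by a multiple of $\gamma_k$ alone. Hence $\mathrm{span}(I(\ap)_{\min})=\mathrm{span}(\{\theta\}\cup J)$, where $J=\{\gamma_1,\ldots,\gamma_m\}\subseteq\Pi$. Summing the chain relations gives $\theta-\ap=\sum_i c_i\gamma_i$, so $J$ is precisely the $\Pi$-support of $\theta-\ap$; since the highest root $\theta=\sum_{\ap_j\in\Pi}m_j\ap_j$ has $m_j\ge 1$ for every $j$ (a standard consequence of the connectedness of the Dynkin diagram), every simple root $\ne\ap$ appears in $\theta-\ap$ with positive coefficient. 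Therefore $J\supseteq\Pi\setminus\{\ap\}$, so $\{\theta\}\cup J\supseteq\Pi$ spans $V$, giving $n$ linearly independent roots in $I(\ap)_{\min}$.

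The main obstacle is extracting from the proof of Proposition~\ref{prop:elem-ext} the precise form $\mu_2^{(i)}=\theta-\beta_i$ of the root added at step $i$; once this identification is made, the remainder of the argument reduces to linear algebra on a reduced expression together with the well-known fact that the highest root has full $\Pi$-support.
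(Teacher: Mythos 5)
Your proof is correct and follows essentially the same route as the paper: both hinge on the minuscule element $w=v_\ap s_0$ and on the fact that, since $v_\ap(\theta)=\ap$ and $\theta$ has full support, a reduced word for $v_\ap$ must involve every simple reflection from $\Pi\setminus\{\ap\}$, whence the inversion set of $w$ (equivalently, the ideal) spans the whole space. You merely make explicit what the paper compresses into ``this easily implies,'' by writing $I(\ap)_{\min}=\{\theta\}\cup\{\theta-\beta_i\}$ with the $\beta_i$ the inversions of the reduced expression and computing the span directly.
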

\begin{proof}
By \cite[Thm.\,4.3]{imrn}, the corresponding minuscule element $w\in\HW$
equals $v_\ap s_0$, where $v_\ap\in W$ is the unique element of minimal length 
taking $\theta$ to $\ap$. 
Since $v_\ap(\theta)=\ap$, any reduced decomposition of  $v_\ap$ contains all simple reflections
corresponding to $\Pi\setminus \{\ap\}$. Therefore  $w$ contains reflections corresponding
to $n=\#(\Pi)$ linearly independent roots. This easily implies that the inversion set
$\eus N(w)$ contains $n$ linearly independent affine roots. Hence $\ah(\ap)_{\min}$ is of full rank.
\end{proof}

\begin{lm}   \label{lm:max-I-max}
For any $\ap\in\Pi_l$, we have 
$\max\bigl(\Delta^+\setminus I(\ap)_{\max}\bigr)\subset \gH\setminus\{\theta\}$.
\end{lm}
\begin{proof}
If $\gamma\in \max\bigl(\Delta^+\setminus I(\ap)_{\max}\bigr)$, then $I(\ap)_{\max}\cup\{\gamma\}$ determines a $\be$-stable subspace of $\ut$, which is no longer abelian. That is, there exists
$\xi\in I(\ap)_{\max}$ such  that $\xi+\gamma\in\Delta^+$. Then there are $\xi'\curge \xi$ and 
$\gamma'\curge \gamma$ such that $\xi'+\gamma'=\theta$, see \cite[p.\,1897]{imrn}. 
Since $I(\ap)_{\max}$ is abelian, this
clearly implies that $\gamma'=\gamma$, hence $\gamma\in  \gH\setminus\{\theta\}$.
\end{proof}

\begin{thm}     
Let $\ah\in\Ab$. The following conditions are equivalent:
\begin{itemize}
\item[\sf (1)] \  $\z_\g(\ah)$ is an abelian ideal;
\item[\sf (2)] \  $\z_\g(\ah)=\ah(\ap)_{\max}$ \ for some $\ap\in\Pi_l$;
\item[\sf (3)] \  $\rt(\ah)\in\Pi_l$.
\end{itemize}
\end{thm}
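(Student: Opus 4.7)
The plan is to prove (1)$\Leftrightarrow$(2) and (2)$\Leftrightarrow$(3); the closing remark on $\z_\g(\ah)=\ah$ drops out afterward, since $\ah$ maximal means $\ah=\ah(\ap)_{\max}$ for some $\ap\in\Pi_l$ (hence $\z_\g(\ah)=\ah(\ap)_{\max}=\ah$ by the equivalence), while conversely $\z_\g(\ah)=\ah$ makes $\ah=\z_\g(\ah)$ an abelian ideal, so $\ah=\ah(\ap)_{\max}$. The step (1)$\Leftrightarrow$(2) is light: (2)$\Rightarrow$(1) is obvious, and for (1)$\Rightarrow$(2) the abelian ideal $\z_\g(\ah)\supseteq\ah$ must sit inside some $\ah(\ap)_{\max}$ with $\ap\in\Pi_l$, while $\ah\subseteq\ah(\ap)_{\max}$ and the abelianness of $\ah(\ap)_{\max}$ force $\ah(\ap)_{\max}\subseteq\z_\g(\ah)$, giving equality.

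The heart of (3)$\Rightarrow$(2) is the identity $\z_\g(\ah(\ap)_{\min})=\ah(\ap)_{\max}$ for $\ap\in\Pi_l$. By Lemma~\ref{lm:4}, $\ah(\ap)_{\min}$ is of full rank, so Lemma~\ref{lm:full-rang} gives $\z_\g(\ah(\ap)_{\min})\subseteq\ut$; being $\be$-stable, its root set is an upper ideal $J\subseteq\Delta^+$ containing $I(\ap)_{\max}$. If $J\setminus I(\ap)_{\max}$ were nonempty, pick a maximal element $\gamma^*\in J\setminus I(\ap)_{\max}$; the upper-ideal property of $J$ forces $\gamma^*+\ap_i\in I(\ap)_{\max}$ whenever $\gamma^*+\ap_i\in\Delta^+$, so $\gamma^*\in\max(\Delta^+\setminus I(\ap)_{\max})$. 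Theorem~\ref{thm:intr4} then supplies $\theta-\gamma^*\in I(\ap)_{\min}$ with $\gamma^*+(\theta-\gamma^*)=\theta\in\Delta^+$, contradicting $\gamma^*\in\z_\g(\ah(\ap)_{\min})$. Given this identity, $\rt(\ah)=\ap$ yields $\ah(\ap)_{\min}\subseteq\ah\subseteq\ah(\ap)_{\max}$, and sandwiching centralisers gives $\ah(\ap)_{\max}\subseteq\z_\g(\ah)\subseteq\z_\g(\ah(\ap)_{\min})=\ah(\ap)_{\max}$.

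For (2)$\Rightarrow$(3), assume $\z_\g(\ah)=\ah(\ap)_{\max}$; then $\ah\subseteq\ah(\ap)_{\max}$ gives $\rt(\ah)\curge\ap$ by \cite[Cor.\,3.3]{imrn}. To get the reverse, I show $I(\ap)_{\min}\subseteq I_\ah$. Pick $\nu\in\min I(\ap)_{\min}$: Theorem~\ref{thm:intr4} produces $\gamma:=\theta-\nu\in\max(\Delta^+\setminus I(\ap)_{\max})$, so $\gamma\notin\z_\g(\ah)$, and hence some $\tilde\nu\in I_\ah$ satisfies $\gamma+\tilde\nu\in\Delta^+$. Rewriting $\gamma+\tilde\nu=\theta-(\nu-\tilde\nu)$ and using that $\theta$ is the highest root yields $\nu\curge\tilde\nu$; then upper-ideal-ness of $I_\ah$ forces $\nu\in I_\ah$. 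So $\min I(\ap)_{\min}\subseteq I_\ah$, whence (since $I(\ap)_{\min}$ is the upper ideal of $\Delta^+$ generated by its own minimal elements) $I(\ap)_{\min}\subseteq I_\ah$. Proposition~\ref{prop:other-in-fibre} then gives $I(\rt(\ah))_{\min}=I_\ah\cap\gH\supseteq I(\ap)_{\min}$, and the minimal-ideal containment bullet in the Introduction gives $\rt(\ah)\curle\ap$; combined with $\rt(\ah)\curge\ap$, we conclude $\rt(\ah)=\ap\in\Pi_l$. I expect the most delicate point to be verifying in (3)$\Rightarrow$(2) that $\gamma^*$ truly lies in $\max(\Delta^+\setminus I(\ap)_{\max})$ — playing the upper-ideal property of $J$ against the ambient $\curge$-order on $\Delta^+$.
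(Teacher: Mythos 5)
Your proof is correct, and for most of the theorem it follows the paper's own route: (1)$\Leftrightarrow$(2) via the fact that every abelian ideal sits inside some $\ah(\ap)_{\max}$, $\ap\in\Pi_l$, which is then forced to coincide with the abelian centraliser; and (3)$\Rightarrow$(2) by sandwiching $\z_\g(\ah)$ between $\ah(\ap)_{\max}$ and $\z_\g(\ah(\ap)_{\min})$ and proving $\z_\g(\ah(\ap)_{\min})=\ah(\ap)_{\max}$ from Lemma~\ref{lm:4} (full rank, hence Lemma~\ref{lm:full-rang} applies) together with Theorem~\ref{thm:intr4}. Your extra verification that a maximal element $\gamma^*$ of $J\setminus I(\ap)_{\max}$ is genuinely maximal in $\Delta^+\setminus I(\ap)_{\max}$ is sound: every root $\succ\gamma^*$ dominates some $\gamma^*+\ap_i\in\Delta^+$, which lies in $J$ and hence, by maximality, in the upper ideal $I(\ap)_{\max}$.

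The one place you genuinely diverge is (2)$\Rightarrow$(3). The paper splits into cases: in the simply-laced case it invokes Lemma~\ref{lm:3} (uniqueness of the long simple root below $\rt(\ah)$ forces $\rt(\ah)\in\Pi$), while in the non-simply-laced case it argues via Lemma~\ref{lm:sum-abelian} that $\theta-[\theta/2]\in I_\ah$, hence $I_\ah\supseteq I(\vert\Pi_l\vert)_{\min}$ and $\rt(\ah)\curle\vert\Pi_l\vert$, contradicting the presence of a short simple root in the support of $\rt(\ah)$. You instead give a uniform argument: for each $\nu\in\min\bigl(I(\ap)_{\min}\bigr)$, the root $\gamma=\theta-\nu$ lies outside $I(\ap)_{\max}=I_{\z_\g(\ah)}$, so some $\tilde\nu\in I_\ah$ has $\gamma+\tilde\nu\in\Delta^+$, whence $\tilde\nu\curle\nu$ and $\nu\in I_\ah$; this yields $I(\ap)_{\min}\subseteq I_\ah\cap\gH=I(\rt(\ah))_{\min}$, hence $\rt(\ah)\curle\ap$, which together with $\rt(\ah)\curge\ap$ gives equality. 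This avoids both the simply-laced/non-simply-laced case distinction and the detour through $[\theta/2]$, at the price of invoking Theorem~\ref{thm:intr4} a second time; since that theorem and the description of $I(\vert\Pi_l\vert)_{\min}$ used by the paper are each established only case-by-case, neither version is more conceptual than the other, but yours is the more economical in its list of prerequisites. Both arguments are valid.
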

\begin{proof}
(1)$\Rightarrow$(2): \ See the paragraph in front of Lemma~\ref{lm:3}.

(2)$\Rightarrow$(1): \ Obvious.  

(2)$\Rightarrow$(3): \ Here $\ah(\ap)_{\max}$ is the only maximal abelian ideal that contains $\ah$. Therefore, in the simply-laced case, the assertion follows from Lemma~\ref{lm:3}. 

For the non-simply-laced case, assume that $\rt(\ah)=\gamma\not\in\Pi_l$, but still
$\gamma$ majorizes a unique long simple root. Then $\gamma$ also majorizes  a short simple 
root, whence $\gamma\not\curle |\Pi_l|$.
We claim that $\theta-[\theta/2]\not\in I_\ah$, and thereby $\z_\g(\ah)$ is not a sum of abelian ideals, in view of Lemma~\ref{lm:sum-abelian}.
Indeed, assume that $\theta-[\theta/2]\in I_\ah$. Then $I_\ah$ contains the upper ideal of
$\Delta^+$ generated by $\theta-[\theta/2]$, which is exactly
$\bigcap_{\ap\in\Pi_l}I(\ap)_{\min}=I(|\Pi_l|)_{\min}$, see Example~\ref{ex:all-maximal}.
Then the inclusion $I_\ah\supset I(|\Pi_l|)_{\min}$ implies that 
$\gamma=\rt(\ah)\curle |\Pi_l|$, a contradiction!

(3)$\Rightarrow$(2): \ It suffices to prove that the centraliser of $\ah(\ap)_{\min}$
equals $\ah(\ap)_{\max}$ for any $\ap\in\Pi_l$.
To this end, we have to check that:

(i) \ $\z_\g(\ah(\ap)_{\min})$ contains no semisimple 
elements of $\g$ (i.e., $\ah(\ap)_{\min}$ is of full rank), and 

(ii) \ the nilpotent subalgebra $\z_\g(\ah(\ap)_{\min})$ cannot be larger
than $\ah(\ap)_{\max}$, i.e., for any 
$\gamma\in \max \bigl(\Delta^+\setminus I(\ap)_{\max}\bigr)$, 
there exists a $\nu\in I(\ap)_{\min}$ such that $\gamma+\nu\in\Delta^+$.

For (i):  This is Lemma~\ref{lm:4}.

For (ii):  If $\gamma\in \max \bigl(\Delta^+\setminus I(\ap)_{\max}\bigr)$, then $\gamma\in
\gH\setminus\{\theta\}$ (Lemma~\ref{lm:max-I-max}). Then $\theta-\gamma\in I(\ap)_{\min}$ in view
of Lemma~\ref{lm:one-prop}.
\end{proof}

\begin{thm}  \label{thm:stunning}
For $\ap\in\Pi_l$, there is a one-to-one correspondence between
$\min\bigl( I(\ap)_{\min}\bigr)$ and $\max \bigl(\Delta^+\setminus I(\ap)_{\max}\bigr)$.
Namely, for every $\eta\in 
\max \bigl(\Delta^+\setminus I(\ap)_{\max}\bigr)$, there is $\eta' \in \min\bigl( I(\ap)_{\min}\bigr)$ 
such that $\eta+\eta'=\theta$; 
and vice versa.
\end{thm}
\begin{proof}
We assume that $\rk\Delta >1$, hence $\theta\ne \ap$, $I(\ap)_{\min}\ne\{\theta\}$,
and $\gH\setminus\{\theta\}\ne\varnothing$.

1) \ If $\eta\in  \max \bigl(\Delta^+\setminus I(\ap)_{\max}\bigr)$, then 
$\eta\in \gH\setminus\{\theta\}$ (Lemma~\ref{lm:max-I-max}) and also $\eta\not\in I(\ap)_{\min}$.
Hence
$\eta'=\theta-\eta\in I(\ap)_{\min}$ (Lemma~\ref{lm:one-prop}). Actually,
$\eta'$ is a minimal element of $I(\ap)_{\min}$. For, if $\xi'\in I(\ap)_{\min}$ and 
$\eta'\succ \xi'$, then $\theta-\xi'\succ\eta$ and hence $\theta-\xi'\in I(\ap)_{\max}$.
As $I(\ap)_{\max}\cap\gH=I(\ap)_{\min}$, one would obtain $\theta-\xi'\in I(\ap)_{\min}$, which contradicts Lemma~\ref{lm:one-prop}.

2)  If $\eta'\in I(\ap)_{\min}$, then $\eta:=\theta-\eta'\in (\gH\setminus I(\ap)_{\min})$. Hence
$\eta\not\in I(\ap)_{\max}$. Assume that $\eta$ is not maximal in $\Delta^+\setminus I(\ap)_{\max}$
and $\xi\succ\eta$ with $\xi\not\in I(\ap)_{\max}$. Then $\theta-\xi\prec \eta'$ and $\theta-\xi\in
I(\ap)_{\min}$, which contradicts the choice of $\eta'$.
\end{proof}
\begin{rema}
Lemma~\ref{lm:max-I-max} and the above proof of Theorem~\ref{thm:stunning} 
(=\,Theorem~\ref{thm:intr4}) are based on the suggestion of the anonymous referee. This uniform 
proof replaces our initial case-by-case considerations.
\end{rema}
\begin{ex}
We describe the corresponding minimal and maximal elements in the two extreme cases---the most classical ($\GR{A}{n}$) and
most exceptional ($\GR{E}{8}$).

As usual,  $\Delta^+(\GR{A}{n})=\{ \esi_i-\esi_j \mid1\le i<j\le n+1\}$, and 
$\ap_i=\esi_i-\esi_{i+1}$. Here \\
$\min\bigl( I(\ap_i)_{\max}\bigr){=}\{\ap_i\}$ and $\gH=\{\esi_i-\esi_j \mid i=1 \text{ or } j=n+1\}$. Therefore
 
$\max\bigl(\Delta^+\setminus I(\ap_i)_{\max}\bigr)=\{\esi_1{-}\esi_i, \esi_{i+1}{-}\esi_{n+1} \}$, 
$\min\bigl( I(\ap_i)_{\min}\bigr)=\{\esi_i{-}\esi_{n+1}, \esi_1{-}\esi_{i+1} \}$.
\\
The respective roots in the previous row sums to $\theta=\esi_1-\esi_{n+1}$.

For $\GR{E}{8}$, we use the natural numbering of $\Pi$, i.e.,
$\left(\text{\begin{E8}{1}{2}{3}{4}{5}{6}{7}{8}\end{E8}}\right)$. The root 
$\gamma=\sum_{i=1}^8 n_i\ap_i$ is denoted by $n_1 n_2\dots n_8$.
Here $\theta=23456423$ and $\gamma\in\gH$ if and only if $n_1\ne 0$. The respective maximal and minimal elements are gathered in Table~\ref{tabl-E}.

\begin{table}[htb]   
\begin{center}
\caption{Data for the root system of type $\GR{E}{8}$}
\begin{tabular}{cccc}  \label{tabl-E}
$i$  &   $\min\bigl( I(\ap_i)_{\max}\bigr)$ & $\min\bigl( I(\ap_i)_{\min}\bigr)$ & 
$\max\bigl(\Delta^+\setminus I(\ap_i)_{\max}\bigr)$ \\ \hline 
1 & 12222101 & 12222101 &  11234322 \\ \cline{2-4}
2 & 12222111 & 12222111  &  11234312 \\
   & 01234322 & 11234322 &   12222101 \\ \cline{2-4}
3 & 12222211 & 12222211 &  11234212  \\
   & 01234312 & 11234312 &  12222111 \\ \cline{2-4}
4 & 12223211 & 12223211 &  11233212 \\
   & 01234212 & 11234212 &  12222211 \\ \cline{2-4}
   & 12223212 & 12223212 &  11233211 \\
5 & 12233211 & 12233211  &  11223212 \\
   & 01233212 & 11233212  &  12223211 \\ \cline{2-4}
6 & 12333211 &  12333211 &  11123212 \\
   & 01223212 & 11223212  &  12233211 \\ \cline{2-4}
7 & 00123212 & 11123212  &  12333211 \\ \cline{2-4}
8 & 01233211 & 11233211  &  12223212 \\  \hline
\end{tabular}
\end{center}
\end{table}
\end{ex}

\noindent
Theorem~\ref{thm:stunning} is not true for arbitrary  long roots in place
of $\ap\in\Pi_l$. However, it can  be extended as follows. 

\begin{thm}       \label{thm:modification}
Let $\gS$ be any connected subset of $\Pi_l$. Then there is 
a one-to-one correspondence between
$
\min\bigl(\bigcap_{\ap_i\in\gS} I(\ap_i)_{\min}\bigr)$ and 
$
\max\bigl(\bigcap_{\ap_i\in\gS}(\Delta^+\setminus I(\ap_i)_{\max})\bigr)$.
Namely, for every
$\nu \in \min\bigl(\bigcap_{\ap_i\in\gS} I(\ap_i)_{\min}\bigr)$, there is 
$\nu'\in \max \bigl(\bigcap_{\ap_i\in\gS}(\Delta^+\setminus I(\ap_i)_{\max})\bigr)$ 
such that $\nu+\nu'=\theta$.
\end{thm}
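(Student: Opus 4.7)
The plan is to reduce the left-hand set to a single $\min(I(\mu)_{\min})$ via Theorem~\ref{thm:sup-min}, propose the $\theta$-complement as the candidate bijection, and complete the argument by a case-by-case analysis paralleling the treatment of Theorem~\ref{thm:stunning}.

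First, by Theorem~\ref{thm:sup-min}(ii),
\[
\bigcap_{\ap_i \in \gS} I(\ap_i)_{\min} = I(\mu)_{\min}, \qquad \mu := \vee_{\ap_i \in \gS} \ap_i,
\]
and connectivity of $\gS \subseteq \Pi_l$ makes $\mu$ explicit: a direct inspection shows that in every simple type the join coincides with the ordinary sum $\sum_{\ap_i \in \gS} \ap_i$, which is a long positive root. Hence the left side of the asserted bijection is $\min(I(\mu)_{\min})$. On the other side,
\[
\bigcap_{\ap_i \in \gS}(\Delta^+ \setminus I(\ap_i)_{\max}) = \Delta^+ \setminus \bigcup_{\ap_i \in \gS} I(\ap_i)_{\max}
\]
is a lower set in $(\Delta^+, \curle)$, whose maximal elements depend only on the data $\min(I(\ap_i)_{\max})$ for $\ap_i \in \gS$, tabulated in \cite[Sect.\,4]{pr}.

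Next I would propose $\nu \mapsto \theta - \nu$ and check that it is well-defined: since $\nu \in \gH$ by \cite[Thm.\,4.3]{imrn}, we have $(\nu, \theta) \ne 0$ and $\theta - \nu \in \Delta^+$. The three substantive checks are (a) $\theta - \nu \notin I(\ap_i)_{\max}$ for every $\ap_i \in \gS$; (b) every root strictly above $\theta - \nu$ in the root order belongs to some $I(\ap_i)_{\max}$; and (c) the inverse direction holds symmetrically. My natural attempt is to invoke Theorem~\ref{thm:stunning} for each $\ap_i \in \gS$ individually: pick $\nu_i \in \min(I(\ap_i)_{\min})$ with $\nu \curge \nu_i$, pair $\nu_i$ with $\theta - \nu_i \in \max(\Delta^+ \setminus I(\ap_i)_{\max})$, and use connectivity of $\gS$ to force these constraints to align at the single root $\theta - \nu$.

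The main obstacle is that the correspondence fails for disconnected $\gS$, so connectivity must enter combinatorially rather than purely formally, and the sketch just outlined will not close on its own. Accordingly --- following the author's own treatment of Theorem~\ref{thm:stunning} --- the cleanest completion is to compile, for each simple type and each connected $\gS \subseteq \Pi_l$, a table analogous to Table~\ref{tabl-E} listing $\min(I(\mu)_{\min})$ alongside $\max(\Delta^+ \setminus \bigcup_{\ap_i \in \gS} I(\ap_i)_{\max})$ and verify the $\theta$-complement pairing row by row. The bulk of the case-by-case work will fall on $\GR{E}{7}$ and $\GR{E}{8}$, although the uniform pattern for $|\gS|=1$ visible in Table~\ref{tabl-E} strongly suggests that an adjacent-sum rule governs larger connected $\gS$ as well, which should make the verification tractable.
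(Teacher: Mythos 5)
Your proposal matches the paper's own treatment: the paper offers no conceptual argument for Theorem~\ref{thm:modification} either, stating only that ``the proof is based on direct calculations, which are omitted,'' i.e., exactly the classification-based, table-compiling verification you fall back on after the formal sketch fails to close. Your preliminary reduction of the left-hand side to $\min\bigl(I(\mu)_{\min}\bigr)$ with $\mu=\sum_{\ap_i\in\gS}\ap_i$ via Theorems~\ref{thm:sup-min} and \ref{thm-app2} is correct and consistent with Example~\ref{ex:all-maximal}, but in both your write-up and the paper the substantive content is the omitted case-by-case check.
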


Our proof is based on direct calculations, which are omitted.
It's would be interesting to find a conceptual argument.

\begin{ex}
For $\# \gS=1$, we have Theorem~\ref{thm:stunning}. At the other extreme, if $\gS=\Pi_l$, then $\bigcap_{\ap_i\in\Pi_l}(\Delta^+\setminus I(\ap_i)_{\max})=
\Delta^+\setminus  \bigcup_{\ap_i\in\Pi_l}I(\ap_i)_{\max}=
\Delta^+\setminus \Delta^+_{\mathsf{com}}$. 
Therefore, 
\[
   \max\Bigl(\bigcap_{\ap_i\in\Pi_l}\bigl(\Delta^+\setminus I(\ap_i)_{\max}\bigr)\Bigr)=\{[\theta/2]\} .
\]
Also, $\bigcap_{\ap_i\in\Pi_l} I(\ap_i)_{\min}=I(|\Pi_l|)_{\min}$ and the unique minimal element of this ideal is $\theta-[\theta/2]$, see Example~\ref{ex:all-maximal}.
Thus, an a priori proof of Theorem~\ref{thm:modification} would provide an explanation
of properties of abelian ideals with rootlet $|\Pi_l|$, cf. Example~\ref{ex:all-maximal} and
Remark~\ref{rmk:comm-roots}.
\end{ex}

\appendix
\section{A property of root systems} 
\label{app:A}

\noindent
Let $\Delta$ be a reduced irreducible root system, with a set of simple roots
$\Pi=\{\ap_1,\dots,\ap_n\}$. 

\begin{df}   \label{def:join}
Let $\eta,\beta\in\Delta^+$.
The root $\kappa$ is  the {\it least upper bound} (or {\it join\/}) of $\eta$ and $\beta$, if 
\begin{itemize}
\item \ $\kappa\curge \eta,\ \kappa\curge \beta$;
\item \ if  $\kappa'\curge \eta,\ \kappa'\curge \beta$, then $\kappa'\curge \kappa$.
\end{itemize}
The join of $\eta$ and $\beta$  is denoted by $\eta\vee\beta$.
\end{df}

\noindent
Our goal is to prove that $\eta\vee\beta$ exists for all pairs $(\eta,\beta)$, i.e., 
$(\Delta^+, \curle)$ is a join-semilattice (see \cite[3.3]{stan} about lattices). 
We actually prove a more precise assertion.
For any pair $\eta,\beta\in \Delta^+$, we define an element $\eta\vee\beta\in Q$ and then prove that it is always a  root.
The very construction of $\eta\vee\beta$ will make it clear that this root satisfy the conditions
of Definition~\ref{def:join}. We also prove that $\eta\vee\beta\in\Delta_l$, whenever 
$\eta,\beta\in\Delta_l$, so that this general setup is compatible with that of 
Section~\ref{sect:intersect}. This goes as follows.
If $\eta=\sum_{i=1}^n a_i\ap_i$, then $\hot(\eta)=\sum_ia_i$ and the {\it support\/} of 
$\eta$ is $\supp(\eta)=\{\ap_i\mid a_i\ne 0\}$. We regard $\supp(\eta)$ as subset of the 
Dynkin diagram $\eus D(\Delta)$. 
As is well known, $\supp(\eta)$ is a connected subset of $\eus D(\Delta)$
for all $\eta\in \Delta$ \cite[Ch. VI, \S\,1, n.6]{bour}.
If $\beta=\sum_{i=1}^n b_i\ap_i$, then 
$\max\{\eta,\beta\}:=\sum_{i=1}^n \max\{a_i,b_i\}\ap_i$. In general, it is merely an element of $Q$.

Say that $\supp(\eta)$ and $\supp(\beta)$ are {\it disjoint}, if 
$\supp(\eta)\cup\supp(\beta)$ is disconnected. Then there is a unique chain in
$\eus D(\Delta)$ connecting both supports, since $\eus D(\Delta)$ is a tree. 
If this chain consists of simple roots $\{\ap_{i_1},\dots,\ap_{i_s}\}$, then, by definition,
the {\it connecting root\/} is  $\ap_{i_1}+\ldots +\ap_{i_s}$. By \cite[Ch. VI, \S\,1, n.6, Cor.\,3]{bour},
it is indeed a root.

\begin{thm}   \label{thm-app2}
{$\mathsf 1^o$.} \ If\/ $\supp(\eta)\cup \supp(\beta)$ is a connected subset of\/ $\eus D(\Delta)$, then
$\eta\vee\beta=\max\{\eta,\beta\}$.

{$\mathsf 2^o$.} \ If\/ $\supp(\eta)$ and $\supp(\beta)$ are disjoint, then 
$\eta\vee\beta=\eta+\beta+(\text{connecting root})$.
\end{thm}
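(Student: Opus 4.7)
My plan is to define $\eta\vee\beta$ via the explicit formulas in parts $\mathsf{1}^o$ and $\mathsf{2}^o$, then verify (a) that it is a root, and (b) that it satisfies the join property of Definition~\ref{def:join}. Property (b) reduces to an upper-bound argument: any $\kappa\in\Delta^+$ with $\kappa\curge\eta,\beta$ has coordinates $\ge\max(a_i,b_i)$ for each $i$. In Case $\mathsf{2}^o$ one additionally exploits that $\supp(\kappa)$ is connected (since $\kappa\in\Delta$) and that the Dynkin diagram of an irreducible root system is a tree, so the unique chain of intermediate simple roots bridging $\supp(\eta)$ and $\supp(\beta)$---namely $\supp(\xi)$---must lie inside $\supp(\kappa)$, yielding $\kappa\curge\eta+\beta+\xi$. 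Thus the substantive task is existence.

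For Case $\mathsf{2}^o$ I build $\eta+\beta+\xi$ by a sequence of single simple-root additions. Order $\xi=\ap_{i_1}+\dots+\ap_{i_s}$ so that $\ap_{i_1}$ is adjacent to $\supp(\beta)$, $\ap_{i_s}$ to $\supp(\eta)$, and (by minimality of the chain) each intermediate $\ap_{i_k}$ is adjacent only to $\ap_{i_{k-1}}$ and $\ap_{i_{k+1}}$ within $\supp(\eta)\cup\supp(\beta)\cup\supp(\xi)$. Direct inner-product checks then give $(\ap_{i_1},\beta)<0$, inductively $(\ap_{i_{k+1}},\,\beta+\ap_{i_1}+\dots+\ap_{i_k})=(\ap_{i_{k+1}},\ap_{i_k})<0$, and finally $(\eta,\beta+\xi)=(\eta,\ap_{i_s})<0$; each strict negativity forces the next sum into $\Delta^+$, so the construction reaches $\eta+\beta+\xi\in\Delta^+$.

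For Case $\mathsf{1}^o$ I first restrict to the Levi subsystem with simple roots $\Pi_0:=\supp(\eta)\cup\supp(\beta)$, which is irreducible by connectivity, and so assume $\Pi_0=\Pi$. Setting $\gamma:=\max\{\eta,\beta\}$, I induct on $(\hot(\gamma)-\hot(\eta))+(\hot(\gamma)-\hot(\beta))$ via the following \emph{key lemma}: if $\gamma\ne\eta$, then there exists $\ap_j$ with $b_j>a_j$ and $\eta+\ap_j\in\Delta^+$, or symmetrically with $\eta,\beta$ swapped. Adding such $\ap_j$ preserves $\max\{\eta+\ap_j,\beta\}=\gamma$ (since $a_j+1\le b_j$) and drops the induction parameter by one. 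When $\supp(\eta)\subsetneq\Pi$ the lemma is easy: connectivity of $\Pi$ forces the boundary of $\supp(\eta)$ in $\Pi$ to be nonempty and, under our reduction, to lie in $\supp(\beta)\setminus\supp(\eta)\subseteq J$; any such boundary $\ap_j$ satisfies $(\eta,\ap_j)<0$ and hence $\eta+\ap_j\in\Delta^+$.

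The principal obstacle is the residual full-support case $\supp(\eta)=\supp(\beta)=\Pi$ with $\eta,\beta\ne\theta$, where the boundary argument fails and the key lemma must be checked using simple roots inside $\supp(\eta)$ itself. My plan is to settle this via a secondary downward induction on $\hot(\theta)-\hot(\gamma)$, exploiting that positive roots of full support form a small, explicitly describable subset of $\Delta^+$ in each irreducible type---a single element in $\GR{A}{n}$, a chain under $\curle$ in $\GR{B}{n}$, $\GR{C}{n}$, $\GR{D}{n}$, $\GR{G}{2}$, and a short list obtainable by successive descents from $\theta$ in $\GR{E}{6,7,8}$ and $\GR{F}{4}$. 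The author's phrasing ``explicit formula'' suggests that this type-by-type verification is the intended route, and it constitutes the principal technical burden of the proof.
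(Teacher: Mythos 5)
Your skeleton---define the candidate element of $Q$, prove it is actually a root, then deduce the join property from coordinatewise domination together with connectedness of $\supp(\kappa)$ for any upper bound $\kappa$---coincides with the paper's, and your treatment of part $\mathsf 2^o$ is correct. (The paper gets there slightly faster: since the Dynkin diagram has at most one branch point or multiple edge, one of the two supports is a type-$\GR{A}{k}$ chain, so $\beta+(\text{connecting root})$ is visibly a root, and then a single inner-product computation with $\eta$ finishes; your step-by-step march along the connecting chain proves the same thing.)

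The genuine gap is in part $\mathsf 1^o$. Your key lemma is established only when one of the two supports is a \emph{proper} subset of $\Pi_0=\supp(\eta)\cup\supp(\beta)$; for the residual case $\supp(\eta)=\supp(\beta)=\Pi_0$ you offer a programme (a secondary induction plus a type-by-type enumeration of full-support roots) rather than an argument, and this case is not marginal: in $\GR{E}{8}$ or $\GR{F}{4}$ a large proportion of positive roots have full support, so the bulk of the theorem is deferred to an unexecuted finite check. Note also that your surmise about the author's intent is off: the paper's proof of $\mathsf 1^o$ is essentially uniform. It inducts on $\hot(\beta)$ (assuming $\hot(\eta)\ge\hot(\beta)$). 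If $\beta$ has two distinct lower covers $\beta-\ap'$ and $\beta-\ap''$, one writes $\max\{\eta,\beta\}=\max\bigl\{\max\{\eta,\beta-\ap'\},\beta-\ap''\bigr\}$ and recurses. If $\beta-\ap$ is the \emph{unique} lower cover, the decisive input is Kostant's theorem (in Joseph's formulation) that $\Delta_\ap(i)=\{\nu\in\Delta^+\mid \hot_\ap(\nu)=i\}$ is the weight set of a simple module for the Levi subalgebra on $\Pi\setminus\{\ap\}$ and hence has a unique minimal element, which must be $\beta$. This immediately disposes of the case $\hot_\ap(\eta)\ge\hot_\ap(\beta)$ (then $\eta\curge\beta$), and otherwise reduces the problem to showing $\max\{\eta,\beta-\ap\}+\ap\in\Delta^+$, which is done by comparing $(\max\{\eta,\beta-\ap\},\ap)$ with $(\beta-\ap,\ap)$; the only classification used is the exclusion of a small local configuration of pairwise orthogonal short roots at the very end. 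To complete your route you would have to actually prove the key lemma for incomparable full-support pairs (the comparable case is just the standard fact that $\eta\prec\beta$ are joined by a chain of simple-root additions), or else import an ingredient of the above kind; as written, the existence of $\max\{\eta,\beta\}$ as a root---the entire content of $\mathsf 1^o$---remains unproven.
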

\begin{proof}
1) Obviously, if $\kappa\curge \eta,\ \kappa\curge \beta$, then $\kappa\curge
\max\{\eta,\beta\}$. Hence it suffices to prove that here $\max\{\eta,\beta\}$ is a root.

\textbullet \quad If $\supp(\eta)\cap \supp(\beta)=\varnothing$, then $\max\{\eta,\beta\}=\eta+\beta$. 
Since $\supp(\eta)\cup \supp(\beta)$ is connected, we have $(\eta,\beta)<0$. Hence $\eta+\beta$
is a root, and we are done.

\textbullet \quad Assume that 
$\supp(\eta)\cap \supp(\beta)\ne\varnothing$.
Without loss of generality, we may also assume that  $\hot(\eta)\ge \hot(\beta)$. 
Then we will 
argue by induction on $\hot(\beta)$.
\\[.6ex]
-- \ If $\hot(\beta)=1$, then $\beta\in \supp(\eta)$ and $\max\{\eta,\beta\}=\eta$.
\\
-- \ Suppose that $\hot(\beta)>1$ and the assertion is true for all pairs of positive 
roots such that one of them has height strictly less than $\hot(\beta)$.

Assume that there are different simple roots $\ap',\ap''$ such that $\beta-\ap',
\beta-\ap''\in \Delta^+$. Then $\max\{\beta-\ap',\beta-\ap''\}=\beta$, and
by the induction assumption 
\[
 \max\{\eta,\beta\}=\max\bigl\{\max\{\eta,\beta-\ap'\}, \beta-\ap''\bigr\} \in \Delta^+ .
 \]
It remains to handle the case in which there is a unique $\ap\in\Pi$ such that
$\beta-\ap\in \Delta^+$. Let $\hot_\ap(\beta)$ denote the coefficient of $\ap$ 
in the expression of $\beta$ via the simple roots. 
Set $\Delta_\ap(i)=\{\nu\in\Delta^+\mid  \hot_\ap(\nu)=i\}$.
By a result of Kostant (see Joseph's exposition in \cite[2.1]{jos98}), each $\Delta_\ap(i)$ 
is the set of weights of a \un{sim}p\un{le} $\el$-module, where $\el$ is the Levi subalgebra of $\g$ 
whose set of simple roots is $\Pi\setminus\{\ap\}$. Therefore, $\Delta_\ap(i)$
has a unique minimal and unique maximal
elements. Clearly, $\beta$ is the minimal element in $\Delta_\ap(j)$, where
$\hot_\ap(\beta)=j$. This also implies that 
if $\hot_\ap(\nu)\ge j$, then $\nu\curge \beta$. Therefore, if $\hot_\ap(\eta)\ge j$, 
then $\max\{\eta,\beta\}=\eta$. Hence we may assume that $\hot_\ap(\eta)\le j-1$.
Since $\supp(\eta)\cup \supp(\beta)$ is connected and 
$\supp(\eta)\cap \supp(\beta)\ne\varnothing$, the union 
$\supp(\eta)\cup \supp(\beta-\ap)$ is still connected.
Therefore 
\[
\max\{\eta,\beta-\ap\}\in \Delta^+ \ \ \text{ and } \  \ 
\hot_\ap(\max\{\eta,\beta-\ap\})=j-1 .
\] 
Hence $\max\{\eta,\beta-\ap\}+\ap=
\max\{\eta,\beta\}$ and our task is to prove that, under these circumstances, 
$\max\{\eta,\beta-\ap\}+\ap$ is a root.

Since $\hot_\ap(\max\{\eta,\beta-\ap\})=\hot_\ap(\beta-\ap)$, we  have
$(\max\{\eta,\beta-\ap\}, \ap)\le (\beta-\ap,\ap)$.
If $|\!|\ap |\!| \ge |\!|\beta |\!|$, then $\beta-\ap=s_\ap(\beta)$ and
$(\beta-\ap,\ap)<0$. This implies that 
$\max\{\eta,\beta-\ap\}+\ap\in\Delta^+$ 
\quad 
(and completes the proof of part 1$^o$, if all the roots
have the same length!)

Suppose that $|\!|\ap |\!| < |\!|\beta |\!|$. 
We exclude the obvious case when $\Delta$ is of type $\GR{G}{2}$ and assume that
$|\!|\beta |\!|/|\!|\ap |\!|=\sqrt{2}$.
Then $s_\ap(\beta)=\beta-2\ap$, $\beta-\ap$ is short, and $(\beta-\ap, \ap)=0$.

Now, if $(\max\{\eta,\beta-\ap\}, \ap) < (\beta-\ap,\ap)$, we  again 
conclude that  $\max\{\eta,\beta-\ap\}+\ap\in\Delta^+$. 
The other possibility is that $(\max\{\eta,\beta-\ap\}, \ap)=(\beta-\ap,\ap)=0$. 
Because $\hot_\ap(\max\{\eta,\beta-\ap\})=\hot_\ap(\beta-\ap)$, this means that 
$\max\{\eta,\beta-\ap\}$ and $\beta-\ap$ have also the same coefficients on the simple roots adjacent to $\ap$. That is, $\max\{\eta,\beta-\ap\}$
is obtained from $\beta-\ap$ by adding a sequence of simple roots that are {\sl orthogonal\/} to $\ap$.
Therefore, arguing by induction on $\hot(\max\{\eta,\beta-\ap\})-\hot(\beta-\ap)$,
we are left with the following problem: 

{\it Suppose that $\ap,\ap'$ are 
orthogonal simple roots such that $\nu,\nu+\ap, \nu+\ap'\in \Delta^+$, 
both $\nu$ and $\ap$ are short, and $(\nu,\ap)=0$.
Prove that $\nu+\ap+\ap'\in \Delta^+$.  
} \\
Now, if $(\ap',\nu)<0$, then $(\ap',\nu-\ap)<0$ as well.
Hence $\nu-\ap+\ap'\in\Delta$ and $s_{\ap}(\nu-\ap+\ap')=
\nu+\ap+\ap'\in \Delta^+$, as required.
The remaining conceivable possibility is that $\nu,\ap,\ap'$ are pairwise orthogonal and 
short. A quick case-by-case argument shows that this
is actually impossible.

2) In this case, at least one support, say $\supp(\beta)$, 
is a chain with all roots of the same length. Therefore, $\beta$ equals the sum of all simple roots 
in its support. Hence $\tilde\beta=\beta+(\text{connecting root})$ is a root.
Then $\supp(\eta)\cap\supp(\tilde\beta)=\varnothing$ and $\supp(\eta)\cup\supp(\tilde\beta)$
is connected. Hence $(\eta,\tilde\beta)<0$ and $\eta+\tilde\beta\in\Delta^+$.

Obviously, $\eta+\tilde\beta$ is the minimal root  that majorizes  both  $\eta$ and $\beta$.
\end{proof}

An equivalent formulation of Theorem~\ref{thm-app2} is:
\\
{\it The intersection of two principal upper ideals in $\Delta^+$ is again a principal ideal.
That is, $I\langle{\curge}\eta\rangle \cap I\langle{\curge}\beta\rangle=
I\langle{\curge}(\eta\vee\beta)\rangle$.}

\begin{cl}
 If $\eta,\beta\in\Delta^+_l$, then $\eta\vee\beta$ is also long.
\end{cl}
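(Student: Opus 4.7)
The plan is to combine Theorem~\ref{thm-app2} with Theorem~\ref{thm:sup-min}(i). The latter produces a unique smallest long positive root $\mu$ satisfying $\mu\curge\eta$ and $\mu\curge\beta$, while Theorem~\ref{thm-app2} delivers $\kappa:=\eta\vee\beta$ as a positive root, minimum among all positive roots $\curge\eta,\beta$. By the universal property, $\kappa\curle\mu$. Hence everything reduces to showing that $\kappa$ is long: once that is known, the minimality of $\mu$ among long roots with $\mu\curge\eta,\beta$ will force $\mu\curle\kappa$, and thus $\kappa=\mu\in\Delta^+_l$.

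I would establish that $\kappa$ is long by inspecting the two explicit descriptions supplied by Theorem~\ref{thm-app2}. In case $\mathsf{2}^o$ (disjoint supports), one support, say $\supp(\eta)$, is a chain of same-length simple roots whose sum equals $\eta$; longness of $\eta$ forces those simple roots to be long. Inspecting the non-simply-laced types where the case actually occurs (essentially $\GR{B}{n}$), the connecting chain is seen to consist of long simple roots as well, so $\tilde\eta:=\eta+\gamma$ is a long root. A direct inner-product computation, using that $\supp(\tilde\eta)$ meets $\supp(\beta)$ only by adjacency across a single long-long edge of the Dynkin diagram, yields $(\tilde\eta,\beta)=-1$, whence $(\kappa,\kappa)=(\tilde\eta+\beta,\tilde\eta+\beta)=2$.

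In case $\mathsf{1}^o$ with $\supp(\eta)\cap\supp(\beta)=\varnothing$, we have $\kappa=\eta+\beta$ with $(\eta,\beta)<0$. Integrality of root inner products together with the fact that $(\eta,\beta)=-2$ would force $\eta+\beta=0$ imply $(\eta,\beta)=-1$, so $\kappa$ is long. In the remaining sub-case of $\mathsf{1}^o$ with intersecting supports, I would enhance the induction on $\hot(\beta)$ used in the proof of Theorem~\ref{thm-app2} by propagating the extra clause that longness of the two inputs is preserved by $\max\{\cdot,\cdot\}$. The main obstacle here is that passing from $\beta$ to $\beta-\ap$ can turn a long root into a short one, so the enhanced inductive hypothesis is not directly at hand; I expect to resolve this by choosing $\ap$ so that $\beta-\ap$ remains long whenever possible, and otherwise verifying the short-reducing configurations by direct inspection in the finitely many non-simply-laced small-rank cases.
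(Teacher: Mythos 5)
There is a genuine gap: the hard case of your argument --- case $\mathsf{1}^o$ of Theorem~\ref{thm-app2} with $\supp(\eta)\cap\supp(\beta)\ne\varnothing$ --- is not actually proved. You correctly identify the obstacle yourself: the induction in Theorem~\ref{thm-app2} replaces $\beta$ by $\beta-\ap$, which may be short even when $\beta$ is long, so the clause ``both inputs are long'' cannot be propagated through that induction. Your proposed fixes do not close this. Choosing $\ap$ so that $\beta-\ap$ stays long is not always possible (e.g.\ in $\GR{C}{n}$ every long root is $2\esi_i$ and its only simple-root predecessor in the root order is the short root $\esi_i+\esi_{i+1}$), and the fallback of ``direct inspection in the finitely many non-simply-laced small-rank cases'' is not available, since $\GR{B}{n}$ and $\GR{C}{n}$ are non-simply-laced families of unbounded rank. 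The opening reduction via Theorem~\ref{thm:sup-min}(i) is also superfluous: once you know $\kappa=\eta\vee\beta$ is long you are done, and that theorem gives no leverage towards proving longness of $\kappa$ (it only bounds $\kappa$ from above by the least long upper bound $\mu$, which does not exclude $\kappa$ being a short root strictly below $\mu$).

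The paper closes all cases at once, with no induction and no case analysis, by invoking the criterion from Bourbaki (Ch.~VI, \S\,1, Ex.~20): writing $r$ for the squared length ratio, a root $\sum_i a_i\ap_i$ is long if and only if $r$ divides $a_i$ for every \emph{short} simple root $\ap_i$. Both explicit formulae of Theorem~\ref{thm-app2} manifestly preserve this divisibility: the coefficient-wise maximum of two tuples whose short-coordinates are divisible by $r$ again has short-coordinates divisible by $r$, and in the disjoint case the connecting chain contains no short simple roots (as you essentially observed for $\GR{B}{n}$; in the other non-simply-laced types the disjoint configuration with two long roots does not occur). I recommend replacing your case-by-case length computations, and in particular the unproved intersecting-supports case, with this one-line argument.
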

\begin{proof}
Let $r$ be the squared ratio of lengths of long and short roots. (Hence $r\in\{1,2,3\}$.) 
Then $\eta=\sum_{i=1}^na_i \ap_i$ is long if and only if $r$ divides $n_i$ whenever 
$\ap_i$ is short, see \cite[Ch.\,VI, \S\,1, Ex.\,20]{bour}.
Obviously, the formulae of Theorem~\ref{thm-app2} preserve this property.
\end{proof}

Recall that $\Delta_\ap(i)=\{\nu\in\Delta^+\mid  \hot_\ap(\nu)=i\}$ if 
$\ap\in\Pi$.  We regard it as a  subposet of $\Delta^+$.

\begin{cl}  \label{cor:2}
For any $\ap\in\Pi$ and $i\in\BN$, the poset $\Delta_\ap(i)$ is a lattice. 
\end{cl}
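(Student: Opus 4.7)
My approach combines Theorem~\ref{thm-app2} with the Kostant--Joseph structural fact already invoked in the proof of that theorem: for each $i\ge 1$ with $\Delta_\ap(i)\ne\varnothing$, the set $\Delta_\ap(i)$ is the set of weights of a simple $\el$-module, hence has a unique minimal and a unique maximal element with respect to $\curle$. Given this, it suffices to produce joins inside $\Delta_\ap(i)$; the existence of meets then follows formally from finiteness together with the existence of a minimum.

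The key observation is that, for $i\ge 1$ and $\eta,\beta\in\Delta_\ap(i)$, both $\supp(\eta)$ and $\supp(\beta)$ contain $\ap$, hence $\supp(\eta)\cup\supp(\beta)$ is a connected subset of $\eus D(\Delta)$. Therefore case~$\mathsf 1^o$ of Theorem~\ref{thm-app2} applies, giving $\eta\vee\beta=\max\{\eta,\beta\}$ in $\Delta^+$. Reading off the $\ap$-coordinate of the right-hand side yields $\hot_\ap(\eta\vee\beta)=\max\{i,i\}=i$, so the ambient join already lies in $\Delta_\ap(i)$ and, because $\curle$ is induced, it is automatically the join inside the sub-poset.

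For meets I would run the standard lattice-theoretic argument. For $\eta,\beta\in\Delta_\ap(i)$, set $L(\eta,\beta)=\{\gamma\in\Delta_\ap(i)\mid \gamma\curle\eta,\ \gamma\curle\beta\}$. This set is nonempty since it contains the unique minimal element of $\Delta_\ap(i)$; and since $\Delta_\ap(i)$ is finite and closed under binary joins by the previous paragraph, the iterated join $\kappa:=\bigvee_{\gamma\in L(\eta,\beta)}\gamma$ is a well-defined element of $\Delta_\ap(i)$. By construction $\kappa\curle\eta$ and $\kappa\curle\beta$, so $\kappa\in L(\eta,\beta)$ is its top, i.e., $\kappa=\eta\wedge\beta$ in $\Delta_\ap(i)$, as required.

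No substantive obstacle remains: the proof is essentially an amalgamation of Theorem~\ref{thm-app2} with the already-cited Kostant--Joseph result. The only content beyond invocation of these is the elementary observation that the hypothesis $i\ge 1$ forces $\ap\in\supp(\eta)\cap\supp(\beta)$, ruling out the disjoint-support case~$\mathsf 2^o$ of Theorem~\ref{thm-app2} in which a connecting root would be inserted and might perturb the $\ap$-coordinate out of $\Delta_\ap(i)$.
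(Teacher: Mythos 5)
Your proof is correct and follows essentially the same route as the paper: closure of $\Delta_\ap(i)$ under joins via Theorem~\ref{thm-app2}, plus the Kostant--Joseph fact that $\Delta_\ap(i)$ has a unique minimal element, plus the standard fact that a finite join-semilattice with a minimum is a lattice (which the paper simply cites from Stanley, Prop.~3.3.1, rather than rederiving). Your explicit remark that $i\ge 1$ forces $\ap\in\supp(\eta)\cap\supp(\beta)$, so only case $\mathsf 1^o$ of Theorem~\ref{thm-app2} can occur, is a useful elaboration of what the paper leaves implicit.
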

\begin{proof}
Formulae of Theorem~\ref{thm-app2} imply that if $\eta,\beta\in \Delta_\ap(i)$,
then $\eta\vee\beta\in \Delta_\ap(i)$.
Therefore $\Delta_\ap(i)$ is a finite join-semilattice having a unique minimal element.
(The latter is a part of Kostant's result referred to above.) Hence $\Delta_\ap(i)$ is a lattice
by \cite[Prop.\,3.3.1]{stan}.
\end{proof}

\end{document}